\numberwithin{equation}{section}
\theoremstyle{plain}
\newtheorem{theorem}{Theorem}[section]
\newtheorem*{theorem*}{Theorem}
\newtheorem{corollary}[theorem]{Corollary}
\newtheorem{lemma}[theorem]{Lemma}
\providecommand{\customgenericname}{}
\newcommand{\newcustomtheorem}[2]{%
	\newenvironment{#1}[1]
	{%
		\renewcommand\customgenericname{#2}%
		\renewcommand\theinnercustomgeneric{##1}%
		\innercustomgeneric
	}
	{\endinnercustomgeneric}
}
\theoremstyle{definition}
\newtheorem{definition}[theorem]{Definition}
\newtheorem*{example*}{Example}
\newtheorem*{examples*}{Examples}
\newtheorem{remark}[theorem]{Remark}
\newtheorem*{remark*}{Remark}
\newtheorem*{remarks*}{Remarks}
\newtheoremstyle{named}{}{}{\itshape}{}{\bfseries}{.}{.5em}{#1\thmnote{ #3}}
\theoremstyle{named}
\author[S. Chern]{Shane Chern} 
\address[S. Chern]{Department of Mathematics and Statistics, Dalhousie University, Halifax, Nova Scotia, B3H 4R2, Canada} 
\email{chenxiaohang92@gmail.com}
\author[L. Jiu]{Lin Jiu} 
\address[L. Jiu]{Zu Chongzhi Center for Mathematics and Computational Sciences, Duke Kunshan University, Kunshan, Suzhou, Jiangsu Province, 215316, P.R. China} 
\email{lin.jiu@dukekunshan.edu.cn}
\author[S. Li]{Shuhan Li} 
\address[S. Li]{Class of 2024, Duke Kunshan University, Kunshan, Suzhou, Jiangsu Province, 215316, P.R. China} 
\email{shuhan.li371@dukekunshan.edu.cn}
\author[L. Wang]{Liuquan Wang} 
\address[L. Wang]{School of Mathematics and Statistics, Wuhan University, Wuhan, Hubei Province, 430072, P.R. China} 
\email{wanglq@whu.edu.cn; mathlqwang@163.com}
\DeclareMathOperator{\sech}{sech}
\newcommand{\qbinom}[2]{{#1\brack #2}}
\newcommand{\eval}{\mathsf{Eval}}
\begin{document}

\title[Leading coefficient in Hankel determinants]{Leading coefficient in the Hankel determinants related to binomial and $q$-binomial transforms}

\begin{abstract}
It is a standard result that the Hankel determinants for a sequence stay invariant after performing the binomial transform on this sequence. In this work, we extend the scenario to $q$-binomial transforms and study the behavior of the leading coefficient in such Hankel determinants. We also investigate the leading coefficient in the Hankel determinants for even-indexed Bernoulli polynomials with recourse to a curious binomial transform. In particular, the degrees of these Hankel determinants share the same nature as those in one of the $q$-binomial cases.
\end{abstract}

\keywords{Hankel determinant, Bernoulli polynomial, Bernoulli umbra, $q$-binomial transform, leading coefficient}

\subjclass[2020]{Primary 11C20; Secondary 11B68, 33D45}

\sloppy

\maketitle

\section{Introduction}

Given a sequence $(a_{k})_{k\ge 0}$, its $n$-th \emph{Hankel determinant} is the determinant of the \emph{Hankel matrix} 
\begin{align*}
	H_{n}(a_{k}):=\det_{0\leq i,j,\leq n}(a_{i+j})=\det\begin{pmatrix}a_{0} & a_{1} & a_{2} & \cdots & a_{n}\\
		a_{1} & a_{2} & a_{3} & \cdots & a_{n+1}\\
		\vdots & \vdots & \vdots & \ddots & \vdots\\
		a_{n} & a_{n+1} & a_{n+2} & \cdots & a_{2n}
	\end{pmatrix}.
\end{align*}
The study of Hankel determinants has been extensively developed, mainly based on the close relationship to classical orthogonal polynomials and continued fractions; see, e.g., \cite[Chapter 2]{Ismail}. 

The choice of $(a_{k})_{k\ge 0}$ relies on various reasons, but it is usually of general interest to consider classical number-theoretic sequences. For example, the \emph{Bernoulli numbers} $B_{k}$ and \emph{Bernoulli polynomials} $B_{k}(x)$ are defined by their exponential generating functions: 
\begin{align*}
	\frac{t}{e^{t}-1}=\sum_{k=0}^{\infty}B_{k}\frac{t^{k}}{k!}\qquad\text{and}\qquad\frac{te^{xt}}{e^{t}-1}=\sum_{k=0}^{\infty}B_{k}(x)\frac{t^{k}}{k!},
\end{align*}
with a simple connection that% $B_{k}(0)=B_{k}$ and that
\begin{align*}
	B_{k}(x)=\sum_{\ell=0}^{k}\binom{k}{\ell}B_{k-\ell}x^{\ell}.
\end{align*}
Al-Salam and Carlitz \cite[p.~93, Eq.~(3.1)]{AlSalamCarlitz1} discovered that
\begin{align}\label{eq:Bk}
H_{n}(B_{k})=(-1)^{\binom{n+1}{2}}\prod_{j=1}^{n}\frac{(j!)^{6}}{(2j)!(2j+1)!}.%\notag\\
%&=(-1)^{\binom{n+1}{2}}\prod_{j=1}^{n}\left(\frac{j^{4}}{4(2j+1)(2j-1)}\right)^{n+1-j}.
\end{align}
For the Bernoulli polynomials $B_k(x)$, it is surprising that their Hankel determinants stay invariant for any choice of the argument $x$:
\begin{align}\label{eq:Hankel-B-poly}
	H_{n}(B_{k}(x))=H_{n}(B_{k}).
\end{align}
In general, there is a classical result on the invariance of Hankel determinants under the binomial transform 
\begin{equation}
	a_{k}(x):=\sum_{\ell=0}^{k}\binom{k}{\ell}a_{k-\ell}x^{\ell} \label{eq:BinomialTransform}
\end{equation}
for a generic sequence $(a_{k})_{k\ge 0}$. See \cite[p.~419, Entry 445]{Muir} for a direct proof, or an alternative proof in \cite[p.~393, Eq.~(10)]{JiuShi} for the scenario wherein $a_{k}$ is understood as the $k$-th moment of a certain random variable. 

\begin{theorem}[Invariance of Hankel determinants under the binomial transform]
\label{prop:BinomialTransform}
For every $n\ge 0$, define $a_k(x)$ as in \eqref{eq:BinomialTransform}. Then,
\begin{equation}\label{eq:inv}
    H_{n}(a_{k}(x))=H_{n}(a_{k}).
\end{equation}
\end{theorem}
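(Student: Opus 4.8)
The plan is to realize the binomial transform \eqref{eq:BinomialTransform}, at the level of Hankel matrices, as conjugation by a unipotent lower-triangular matrix; since such a conjugation leaves the determinant unchanged, \eqref{eq:inv} follows immediately.

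In detail, fix $n$ and let $M_n := (a_{i+j})_{0\le i,j\le n}$ and $\widetilde{M}_n := (a_{i+j}(x))_{0\le i,j\le n}$ be the two Hankel matrices, so that $H_n(a_k) = \det M_n$ and $H_n(a_k(x)) = \det \widetilde{M}_n$. Introduce the $(n+1)\times(n+1)$ Pascal-type matrix $L_n = L_n(x)$ with entries $(L_n)_{ij} = \binom{i}{j}x^{i-j}$ for $0\le i,j\le n$, with the convention $\binom{i}{j}=0$ when $j>i$. Then $L_n$ is lower-triangular with every diagonal entry equal to $1$, so $\det L_n = 1$.

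The heart of the argument is the identity $\widetilde{M}_n = L_n\, M_n\, L_n^{\top}$. To check it, expand the $(i,j)$ entry of the right-hand side and group terms according to $m := k+\ell$:
\[
\bigl(L_n M_n L_n^{\top}\bigr)_{ij} = \sum_{k=0}^{n}\sum_{\ell=0}^{n}\binom{i}{k}x^{i-k}\,a_{k+\ell}\,\binom{j}{\ell}x^{j-\ell} = \sum_{m\ge 0}\Biggl(\sum_{k+\ell=m}\binom{i}{k}\binom{j}{\ell}\Biggr)a_m\,x^{i+j-m}.
\]
By the Vandermonde convolution the inner sum equals $\binom{i+j}{m}$, and re-indexing with $\ell' = i+j-m$ gives $\sum_{\ell'}\binom{i+j}{\ell'}a_{i+j-\ell'}x^{\ell'}$, which is precisely $a_{i+j}(x) = (\widetilde{M}_n)_{ij}$. (The truncation of the Pascal matrix to size $n+1$ causes no trouble: since $i,j\le n$, the vanishing convention $\binom{i}{k}=0$ for $k>i$ already annihilates every term outside $0\le k\le i$ and $0\le \ell\le j$, so the convolution identity applies verbatim.)

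Taking determinants and using multiplicativity together with $\det L_n = \det L_n^{\top} = 1$ yields $\det\widetilde{M}_n = \det M_n$, i.e.\ \eqref{eq:inv}. The computation is entirely mechanical; the only place requiring a little care is the bookkeeping in the double sum and the verification that truncating $L_n$ is harmless, so I foresee no real obstacle. (Alternatively, in the situation where $a_k$ is the $k$-th moment of a random variable $X$, one may note that $a_k(x)$ is then the $k$-th moment of $X+x$ and invoke the moment-based proof in \cite{JiuShi}.)
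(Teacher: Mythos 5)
Your proof is correct. The paper does not actually prove Theorem \ref{prop:BinomialTransform} itself---it states it as a classical fact and points to Muir's treatise for a ``direct proof'' and to Jiu--Shi for a probabilistic one---and your factorization $\widetilde{M}_n = L_n M_n L_n^{\top}$ with the unipotent Pascal matrix $L_n$, verified via the Vandermonde convolution, is exactly that standard direct argument (your parenthetical remark likewise matches the cited moment-based alternative). The one point needing care, that truncating $L_n$ to size $n+1$ loses nothing because $\binom{i}{k}$ vanishes for $k>i$, is handled correctly.
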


The story becomes much more intricate when considering non-consecutive terms in sequences of polynomials as in \eqref{eq:BinomialTransform}. In early work, Dilcher and Jiu \cite[p.~3, Theorem 1.1]{DilcherJiu} evaluated the Hankel determinants for odd-indexed Bernoulli polynomials. Namely,
\begin{align*}
	H_{n}\left(B_{2k+1}\left(\frac{1+x}{2}\right)\right)=(-1)^{\binom{n+1}{2}}\left(\frac{x}{2}\right)^{n+1}\prod_{j=1}^{n}\left(\frac{j^{4}(x^{2}-j^{2})}{4(2j+1)(2j-1)}\right)^{n+1-j}.
\end{align*}
Here, the argument is chosen as $(1+x)/2$ instead of $x$, to obtain the better factorization of these Hankel determinants. On the other hand, the even-indexed case remains open \cite[pp.~10--11]{DilcherJiu}. It is notable that in view of the first few evaluations as shown in Table~\ref{tab:Even}, these Hankel determinants neither factor nor exhibit any obvious pattern. In fact, there are merely some partial results on special choices of the argument $x$. For instance, Chen \cite[p.~390, Corollary 5.6]{Chen}
obtained for the case $x=0$:
\begin{equation}
H_{n}\big(B_{2k}\big(\tfrac{1}{2}\big)\big)=\prod_{j=1}^n \frac{\big((2j)!\big)^6}{(4j)!(4j+1)!}.\label{eq:B2k1Over2}
\end{equation}

\begin{table}[ht]
	\def\arraystretch{1.5}
	\centering
	\caption{$H_n\left(B_{2k}\left(\frac{1+x}{2}\right)\right)$ for $n=1,2,3$}\label{tab:Even}
	\scalebox{0.83}{%
	\begin{tabular}{c||c}
		\hline 
		$H_1\left(B_{2k}\left(\frac{1+x}{2}\right)\right)$ & $-\frac{1}{12} x^2+\frac{1}{45}$\\
		\hline 
		$H_2\left(B_{2k}\left(\frac{1+x}{2}\right)\right)$ & $-\frac{1}{540} x^6+\frac{97}{18900} x^4-\frac{11}{4725} x^2+\frac{16}{55125}$\\
  		\hline 
		$H_3\left(B_{2k}\left(\frac{1+x}{2}\right)\right)$ & $\frac{1}{42000}x^{12}-\frac{121}{441000}x^{10}+\frac{153}{154000}x^8-\frac{17441 }{12262250}x^6+\frac{8369 }{11036025}x^4-\frac{1632 }{9634625}x^2+\frac{256}{18883865}$\\
		\hline 
	\end{tabular}
	}
\end{table}

%Unfortunately, we still cannot provide the formula for $H_{n}(B_{2k}((1+x)/2))$, but only the degree and leading coefficient. 

Although it seems that a closed expression for $H_{n}\big(B_{2k}(\frac{1+x}{2})\big)$ is out of reach, we shall show in this paper that the degree and leading coefficient of each Hankel determinant for even-indexed Bernoulli polynomials can be characterized, via a curious binomial transform \eqref{eq:B2kBn+k}.

\begin{theorem}
\label{prop:B2k}
For every $n\ge 0$, $H_{n}\big(B_{2k}(\frac{1+x}{2})\big)$ is a polynomial in $x$ of degree $n(n+1)$ with leading coefficient
\begin{equation}
\big[x^{n(n+1)}\big]H_{n}\left(B_{2k}\left(\frac{1+x}{2}\right)\right)=(-1)^{\binom{n+1}{2}}\prod_{j=1}^{n}\frac{(j!)^{6}}{(2j)!(2j+1)!}.\label{eq:B2k}
\end{equation}
\end{theorem}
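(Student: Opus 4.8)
The plan is to realize $\bigl(B_{2k}(\tfrac{1+x}{2})\bigr)_{k\ge 0}$ as the moment sequence of a ``squared Bernoulli umbra'' and to evaluate the Hankel determinant through the classical squared-Vandermonde (Heine) representation of Hankel determinants of moment sequences; the leading term in $x$ will then split off at once, leaving $H_{n}(B_{k})$.

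First I would set up the umbral picture. Write $\mathcal{B}$ for the Bernoulli umbra, so $\eval[\mathcal{B}^{k}]=B_{k}$ and, by the binomial connection recalled in the introduction, $\eval\bigl[(\mathcal{B}+y)^{m}\bigr]=B_{m}(y)$. Since $\bigl((\mathcal{B}+y)^{2}\bigr)^{k}=(\mathcal{B}+y)^{2k}$ as polynomials in $\mathcal{B}$, we get $B_{2k}(y)=\eval\bigl[((\mathcal{B}+y)^{2})^{k}\bigr]$, so $\bigl(B_{2k}(y)\bigr)_{k\ge 0}$ is the moment sequence of the umbra $(\mathcal{B}+y)^{2}$. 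Taking independent copies $\mathcal{B}_{0},\dots,\mathcal{B}_{n}$ of $\mathcal{B}$ (i.e.\ $\eval\bigl[\prod_{i=0}^{n}\mathcal{B}_{i}^{e_{i}}\bigr]=\prod_{i=0}^{n}B_{e_{i}}$), the linear functional $\Lambda$ determined by $\Lambda\bigl[\prod_{i}t_{i}^{e_{i}}\bigr]=\prod_{i}B_{2e_{i}}(y)$ satisfies $\Lambda\bigl[P(t_{0},\dots,t_{n})\bigr]=\eval\bigl[P\bigl((\mathcal{B}_{0}+y)^{2},\dots,(\mathcal{B}_{n}+y)^{2}\bigr)\bigr]$ for every polynomial $P$. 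Applying the identity $H_{n}(m_{k})=\tfrac1{(n+1)!}\Lambda\bigl[\prod_{0\le i<j\le n}(t_{i}-t_{j})^{2}\bigr]$, which follows by writing $\prod_{i<j}(t_{i}-t_{j})=\det(t_{i}^{j})$, squaring, and applying $\Lambda$ monomialwise, with $m_{k}=B_{2k}(y)$, I would obtain
\[
H_{n}\bigl(B_{2k}(y)\bigr)=\frac1{(n+1)!}\,\eval\Bigl[\;\prod_{0\le i<j\le n}\bigl((\mathcal{B}_{i}+y)^{2}-(\mathcal{B}_{j}+y)^{2}\bigr)^{2}\Bigr].
\]

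Next I would factor $(\mathcal{B}_{i}+y)^{2}-(\mathcal{B}_{j}+y)^{2}=(\mathcal{B}_{i}-\mathcal{B}_{j})(\mathcal{B}_{i}+\mathcal{B}_{j}+2y)$ and specialize $y=\tfrac{1+x}{2}$, so $2y=1+x$, giving
\[
H_{n}\Bigl(B_{2k}\bigl(\tfrac{1+x}{2}\bigr)\Bigr)=\frac1{(n+1)!}\,\eval\Bigl[\;\prod_{0\le i<j\le n}(\mathcal{B}_{i}-\mathcal{B}_{j})^{2}\bigl(\mathcal{B}_{i}+\mathcal{B}_{j}+1+x\bigr)^{2}\Bigr].
\]
Since $\prod_{i<j}(\mathcal{B}_{i}+\mathcal{B}_{j}+1+x)^{2}$ has $x$-degree $2\binom{n+1}{2}=n(n+1)$ with leading coefficient $\prod_{i<j}1=1$, the right-hand side is a polynomial in $x$ of degree at most $n(n+1)$, and
\[
\bigl[x^{n(n+1)}\bigr]H_{n}\Bigl(B_{2k}\bigl(\tfrac{1+x}{2}\bigr)\Bigr)=\frac1{(n+1)!}\,\eval\Bigl[\;\prod_{0\le i<j\le n}(\mathcal{B}_{i}-\mathcal{B}_{j})^{2}\Bigr]=H_{n}(B_{k}),
\]
the last equality being the same squared-Vandermonde formula applied to the plain Bernoulli sequence. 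By \eqref{eq:Bk} this equals $(-1)^{\binom{n+1}{2}}\prod_{j=1}^{n}\frac{(j!)^{6}}{(2j)!(2j+1)!}$, which is nonzero, so the degree in $x$ is exactly $n(n+1)$ and Theorem~\ref{prop:B2k} follows.

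I do not expect a real obstacle here; the only delicate point is that the squared-Vandermonde identity must be invoked in its purely formal (umbral) version, since $\mathcal{B}$ is not the moment sequence of a positive measure — hence the derivation should be based on $\prod_{i<j}(t_{i}-t_{j})=\det(t_{i}^{j})$ rather than on an integral. An umbra-free alternative would expand $B_{2k}(\tfrac{1+x}{2})$ through its explicit binomial expansion, reverse the resulting polynomials so that the sought leading coefficient becomes a constant term, and then combine Theorem~\ref{prop:BinomialTransform} with \eqref{eq:Bk}; the umbral route is shorter and makes the appearance of $H_{n}(B_{k})$ transparent.
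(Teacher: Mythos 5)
Your proof is correct, but it takes a genuinely different route from the paper's. The paper first proves the umbral identity \eqref{eq:B2kBn+k}, which exhibits $\big(B_{2k}(ax+b)\big)_{k\ge 0}$ and $\big(\sum_{\ell}\binom{k}{\ell}B_{k+\ell}(ax)(2b)^{k-\ell}\big)_{k\ge 0}$ as binomial transforms of one another, so their Hankel determinants agree by Theorem~\ref{prop:BinomialTransform}; it then specializes $a=b=\frac{1}{2x}$, multiplies the $k$-th term by $x^{k}$ via \eqref{eq:xkDet} so that the sought leading coefficient becomes a constant term, and evaluates at $x=0$ to land on $H_{n}\big(B_{k}(\tfrac12)\big)=H_{n}(B_{k})$ via \eqref{eq:Hankel-B-poly} --- essentially the ``umbra-free alternative'' you sketch in your closing sentence. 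You instead invoke the formal Heine (squared-Vandermonde) representation $H_{n}(m_{k})=\frac{1}{(n+1)!}\Lambda\big[\prod_{i<j}(t_{i}-t_{j})^{2}\big]$ with $n+1$ independent copies of the Bernoulli umbra; this identity is indeed purely algebraic (it follows from $\det(t_i^j)^2$ expanded monomialwise, with no positivity needed), and the difference-of-squares factorization $(\mathcal{B}_{i}+y)^{2}-(\mathcal{B}_{j}+y)^{2}=(\mathcal{B}_{i}-\mathcal{B}_{j})(\mathcal{B}_{i}+\mathcal{B}_{j}+2y)$ then isolates all the $x$-dependence into $\prod_{i<j}(\mathcal{B}_{i}+\mathcal{B}_{j}+1+x)^{2}$, whose top coefficient is $1$, leaving exactly $(n+1)!\,H_{n}(B_{k})$. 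Note that both arguments ultimately rest on the same algebraic germ --- the paper's proof of \eqref{eq:B2kBn+k} uses the identical factorization $(\mathcal{B}+ax+b)^{2}-b^{2}=(\mathcal{B}+ax)(\mathcal{B}+ax+2b)$, just with a single umbra. What your route buys is self-containedness and transparency: you get the degree bound and the leading coefficient in one stroke, without needing the binomial-transform invariance, the $x\mapsto x^{-1}$ reflection trick, or Chen's evaluation of $H_{n}\big(B_{k}(\tfrac12)\big)$; the cost is importing independent umbrae and the formal Heine identity, which the paper does not set up. What the paper's route buys is the intermediate Corollary \eqref{eq:KabxB}, a generalization of Chen's relation \eqref{eq:KB} that is of independent interest.
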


\begin{remark}
	It is notable that the Hankel determinant $H_{n}\big(B_{2k}(\frac{1+x}{2})\big)$ is a linear combination of the terms
	\begin{align*}
		B_{2j_0}\left(\frac{1+x}{2}\right)B_{2(j_1+1)}\left(\frac{1+x}{2}\right)\cdots B_{2(j_n+n)}\left(\frac{1+x}{2}\right)
	\end{align*}
	with $j_0,j_1,\ldots,j_n$ a permutation of $0,1,\ldots,n$. Here, each term is of degree $\sum_{i=0}^n 2(j_i+i) = 2n(n+1)$. However, Theorem \ref{prop:B2k} states that the degree of the Hankel determinant $H_{n}\big(B_{2k}(\frac{1+x}{2})\big)$ is $n(n+1)$, which is only \textbf{half} of the above terms, thereby indicating abundant cancelations of higher powers of $x$ in this determinant expansion.
\end{remark}

As for generalizations, it is always of great interest and importance to consider the $q$-analogue of a number or polynomial sequence. For example, Carlitz \cite{Carlitz} introduced the $q$-Bernoulli numbers, denoted by $\beta_{k}$, as
\begin{align*}
	\beta_{k}:=\frac{1}{(1-q)^{k}}\sum_{j=0}^{k}(-1)^{j}\binom{k}{j}\frac{j+1}{[j+1]_{q}}.
\end{align*}
In 2017, Chapoton and Zeng \cite[p.~359, Eq.~(4.7)]{ChapotonZeng} showed that
\begin{align*}
	H_{n}(\beta_{k})=(-1)^{\binom{n+1}{2}}q^{\binom{n+1}{3}}\prod_{j=1}^{n}\frac{\left([j]_{q}!\right)^{6}}{[2j]_{q}![2j+1]_{q}!}.
\end{align*}
Here, the \emph{$q$-integers} for $m\in \mathbb{Z}$ and the \emph{$q$-factorials} for $M\in \mathbb{N}$ are respectively defined by 
\begin{align*}
	[m]_{q}:=\frac{1-q^{m}}{1-q}\qquad\text{and}\qquad [M]_{q}!:=\prod_{m=1}^{M}[m]_{q}.
\end{align*}
More recently, a similar study \cite{ChernJiu} was executed on the $q$-Euler numbers. 

Instead of seeking new $q$-analogues of known evaluations of Hankel determinants, in this work, we keep an eye on $q$-analogues of the binomial transform \eqref{eq:BinomialTransform}. Recall that the \emph{$q$-binomial coefficients} are defined by 
\begin{align*}
	\qbinom{n}{k}_{q}:=\begin{cases}
		\dfrac{(q;q)_n}{(q;q)_k(q;q)_{n-k}}, & \text{if $0\le k\le n$},\\[10pt]
		0, & \text{otherwise},
	\end{cases}
\end{align*}
where the \emph{$q$-Pochhammer symbols} for $N\in\mathbb{N}\cup\{\infty\}$ are defined by 
\begin{align*}
	(A;q)_{N}:=\prod_{k=0}^{N-1}(1-Aq^{k}),
\end{align*}
with the compact notation 
\begin{align*}
	(A_{1},A_{2},\ldots,A_{\ell};q)_{N}:=(A_{1};q)_{N}(A_{2};q)_{N}\cdots(A_{\ell};q)_{N}.
\end{align*}
It is notable that when the ordinary binomial coefficients $\binom{n}{k}$ in \eqref{eq:BinomialTransform} are replaced with the $q$-binomial coefficients $\qbinom{n}{k}_q$, the invariance \eqref{eq:inv} is no longer preserved. Now our objective is to characterize the leading coefficient for this $q$-binomial scenario.

Throughout, we assume that $(\alpha_{k})_{k\ge 0}$ is an arbitrary sequence. Define the polynomial sequence  
\begin{align}\label{eq:alpha-poly-1}
	\alpha_{k}(x):=\sum_{\ell=0}^{k}q^{\binom{\ell}{2}}\qbinom{k}{\ell}_{q}\alpha_{k-\ell}x^{\ell}.
\end{align}

\begin{theorem}\label{th:q-alpha-1}
For every $n\ge 0$, the Hankel determinant $H_{n}(\alpha_{k}(x))$ is a polynomial in $x$ of degree $n(n+1)$ with leading coefficient
\begin{equation}\label{eq:LCc1}
\big[x^{n(n+1)}\big]H_{n}(\alpha_{k}(x))=\alpha_{0}^{n+1}(-1)^{\binom{n+1}{2}}q^{3\binom{n+1}{3}}\prod_{j=1}^{n}(1-q^{j})^{n+1-j}.
\end{equation}
\end{theorem}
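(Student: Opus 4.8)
The plan is to reduce the statement to an explicit determinant evaluation by extracting, for each matrix entry, only the top-degree part in $x$. First I would record that $\alpha_k(x)$ is a polynomial in $x$ of degree $k$ with $[x^k]\alpha_k(x)=q^{\binom{k}{2}}\alpha_0$. Consequently, the Hankel matrix entry in position $(i,j)$, namely $\alpha_{i+j}(x)$, has degree $i+j$, so the naive degree bound on $H_n(\alpha_k(x))$ coming from summing over the main antidiagonal is $\sum_{i=0}^n 2i$… but that overcounts; instead the right bookkeeping is via the Leibniz expansion: a term indexed by a permutation $\sigma$ contributes degree $\sum_{i=0}^n (i+\sigma(i)) = n(n+1)$, which is the \emph{same} for every $\sigma$. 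Hence every permutation contributes to the coefficient of $x^{n(n+1)}$, and that coefficient is exactly the determinant of the matrix of leading coefficients $\big([x^{i+j}]\alpha_{i+j}(x)\big)_{0\le i,j\le n} = \big(q^{\binom{i+j}{2}}\alpha_0\big)_{0\le i,j\le n}$. So $[x^{n(n+1)}]H_n(\alpha_k(x)) = \alpha_0^{n+1}\det_{0\le i,j\le n}\big(q^{\binom{i+j}{2}}\big)$, and it remains to evaluate this last determinant; in particular the degree is \emph{exactly} $n(n+1)$ precisely when this determinant is nonzero, which the evaluation below confirms (as a Laurent polynomial identity in $q$, hence nonzero generically).

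Next I would evaluate $D_n := \det_{0\le i,j\le n}\big(q^{\binom{i+j}{2}}\big)$. Write $\binom{i+j}{2} = \binom{i}{2}+\binom{j}{2}+ij$. Factoring $q^{\binom{i}{2}}$ out of row $i$ and $q^{\binom{j}{2}}$ out of column $j$ gives
\begin{equation*}
D_n = \Bigg(\prod_{i=0}^n q^{\binom{i}{2}}\Bigg)^{2}\det_{0\le i,j\le n}\big(q^{ij}\big) = q^{2\sum_{i=0}^n\binom{i}{2}}\det_{0\le i,j\le n}\big((q^{i})^{j}\big).
\end{equation*}
The remaining determinant is a Vandermonde determinant in the nodes $1,q,q^2,\ldots,q^n$, so
\begin{equation*}
\det_{0\le i,j\le n}\big((q^i)^j\big) = \prod_{0\le i<j\le n}(q^j-q^i) = \prod_{0\le i<j\le n} q^i(q^{j-i}-1) = (-1)^{\binom{n+1}{2}}\prod_{0\le i<j\le n} q^i(1-q^{j-i}).
\end{equation*}
Collecting the powers of $q$ and regrouping the factors $(1-q^{j-i})$ by the value $d=j-i$ (each $d\in\{1,\ldots,n\}$ occurring $n+1-d$ times) yields $\prod_{j=1}^n (1-q^j)^{n+1-j}$, while the total $q$-exponent bookkeeping — $2\sum_{i=0}^n\binom{i}{2}$ from the row/column extraction plus $\sum_{0\le i<j\le n} i$ from the Vandermonde — must be shown to collapse to $3\binom{n+1}{3}$.

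The main obstacle is thus purely the power-of-$q$ accounting: one has to verify the identity
\begin{equation*}
2\sum_{i=0}^{n}\binom{i}{2} \;+\; \sum_{0\le i<j\le n} i \;=\; 3\binom{n+1}{3}.
\end{equation*}
This is routine — $\sum_{i=0}^n\binom{i}{2}=\binom{n+1}{3}$, and $\sum_{0\le i<j\le n} i = \sum_{i=0}^n i\,(n-i) = n\binom{n+1}{2}-\sum_{i=0}^n i^2$, which simplifies to $\binom{n+1}{3}$ as well, giving the total $3\binom{n+1}{3}$. Combining this with the sign $(-1)^{\binom{n+1}{2}}$ and the factor $\alpha_0^{n+1}$ from pulling $\alpha_0$ out of each matrix entry gives exactly \eqref{eq:LCc1}, and the nonvanishing of $D_n$ as an element of $\mathbb{Q}(q)$ confirms that the degree in $x$ is exactly $n(n+1)$. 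I would present the degree claim and the leading-coefficient formula together, since they come from the same computation. (One should double-check that no lower-degree contributions conspire — but since the degree-$n(n+1)$ coefficient equals the nonzero $D_n\alpha_0^{n+1}$, the polynomial $H_n(\alpha_k(x))$ genuinely has degree $n(n+1)$, with no further argument needed.)
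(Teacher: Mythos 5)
Your proof is correct. The second half --- the evaluation of $\det_{0\le i,j\le n}\big(q^{\binom{i+j}{2}}\big)$ by writing $\binom{i+j}{2}=\binom{i}{2}+\binom{j}{2}+ij$, extracting row and column factors, and invoking the Vandermonde determinant, with the exponent bookkeeping $2\binom{n+1}{3}+\binom{n+1}{3}=3\binom{n+1}{3}$ --- is exactly the paper's Lemma \ref{le:HDetQBinomialk2}. Where you differ is the reduction step. The paper passes to the reflected polynomials $\alpha'_k(x)=x^k\alpha_k(x^{-1})$, applies the scaling identity \eqref{eq:xkDet} to get $H_n(\alpha'_k(x))=x^{n(n+1)}H_n(\alpha_k(x^{-1}))$, deduces the degree bound from the polynomiality of both sides, and reads off the leading coefficient as $H_n(\alpha'_k(0))=\alpha_0^{n+1}H_n\big(q^{\binom{k}{2}}\big)$. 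You instead argue directly from the Leibniz expansion: every permutation term has total degree $\sum_{i}(i+\sigma(i))=n(n+1)$, so the top coefficient of the determinant is the determinant of the entrywise top coefficients $\big(q^{\binom{i+j}{2}}\alpha_0\big)$. Both arguments are valid and land on the same matrix; yours is marginally more elementary (no reflection, no appeal to \eqref{eq:xkDet}), while the paper's reflection device has the advantage of adapting to Theorem \ref{th:LCc2}, where the Leibniz degrees do \emph{not} detect the answer (each term there has degree $n(n+1)$ but the determinant has degree only $\frac{n(n+1)}{2}$) and a genuinely finer argument is needed. One small caveat shared by both proofs: the claim that the degree is \emph{exactly} $n(n+1)$ requires $\alpha_0\ne 0$ and $q$ generic, which you flag explicitly and the paper leaves implicit.
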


Clearly, the binomial transform \eqref{eq:BinomialTransform} is symmetric in the sense that
\begin{align*}
	a_{k}(x)=\sum_{\ell=0}^{k}\binom{k}{\ell}a_{k-\ell}x^{\ell} = \sum_{\ell=0}^{k}\binom{k}{\ell}a_{\ell}x^{k-\ell}.
\end{align*}
However, due to the prefactor $q^{\binom{\ell}{2}}$ (whose occurrence would be explained in Section \ref{sec:QBinomial}) in the summand of the $q$-binomial transform \eqref{eq:alpha-poly-1}, such a symmetry disappears. Hence, one may also want to consider
\begin{align}\label{eq:alpha-poly-2}
	\widetilde{\alpha}_{k}(x):=\sum_{\ell=0}^{k}q^{\binom{\ell}{2}}\qbinom{k}{\ell}_{q}\alpha_{\ell}x^{k-\ell}.
\end{align}

\begin{theorem}\label{th:LCc2}
	For every $n\ge 0$, the Hankel determinant $H_{n}(\widetilde{\alpha}_{k}(x))$ is a polynomial in $x$ of degree $\frac{n(n+1)}{2}$ with leading coefficient
	\begin{equation}\label{eq:LCc2}
		\big[x^{\frac{n(n+1)}{2}}\big]H_{n}(\widetilde{\alpha}_{k}(x))=\alpha_{0}\alpha_{1}\cdots \alpha_{n}(-1)^{\binom{n+1}{2}}q^{2\binom{n+1}{3}}\prod_{j=1}^{n}(1-q^{j})^{n+1-j}.
	\end{equation}
\end{theorem}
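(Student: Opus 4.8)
The plan is to produce an \emph{exact} closed form for $H_{n}\big(\widetilde{\alpha}_{k}(x)\big)$ and then extract its leading term in $x$. The starting point is the finite $q$-binomial theorem
\begin{align*}
	\prod_{k=0}^{m-1}(x+zq^{k})=\sum_{\ell=0}^{m}q^{\binom{\ell}{2}}\qbinom{m}{\ell}_{q}z^{\ell}x^{m-\ell},
\end{align*}
which lets us write $\widetilde{\alpha}_{m}(x)=\eval\big[\prod_{k=0}^{m-1}(x+\mathcal{A}q^{k})\big]$, where $\eval$ is the linear functional with $\eval[\mathcal{A}^{\ell}]=\alpha_{\ell}$ (this is precisely the meaning of the prefactor $q^{\binom{\ell}{2}}$, to be set up in Section~\ref{sec:QBinomial}). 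Assigning an independent umbra $\mathcal{A}_{i}$ to the $i$-th row, using that $\eval$ is linear and multiplicative across \emph{distinct} umbrae, and expanding the determinant, one gets
\begin{align*}
	H_{n}\big(\widetilde{\alpha}_{k}(x)\big)=\eval\Big[\det_{0\le i,j\le n}\Big(\,\prod_{k=0}^{i+j-1}(x+\mathcal{A}_{i}q^{k})\Big)\Big].
\end{align*}

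The crucial step is the splitting
\begin{align*}
	\prod_{k=0}^{i+j-1}(x+\mathcal{A}_{i}q^{k})=\Big(\prod_{k=0}^{i-1}(x+\mathcal{A}_{i}q^{k})\Big)\Big(\prod_{k=0}^{j-1}\big(x+(\mathcal{A}_{i}q^{i})q^{k}\big)\Big),
\end{align*}
whose first factor depends on the row index alone and so pulls out of row $i$. With $v_{i}:=\mathcal{A}_{i}q^{i}$, the remaining determinant has its $j$-th column given by $P_{j}(v):=\sum_{\ell}q^{\binom{\ell}{2}}\qbinom{j}{\ell}_{q}x^{j-\ell}v^{\ell}$ evaluated at $v=v_{0},\dots,v_{n}$; since $\deg_{v}P_{j}=j$ with leading coefficient $q^{\binom{j}{2}}$, this is a generalized Vandermonde determinant, equal to $\big(\prod_{j=0}^{n}q^{\binom{j}{2}}\big)\prod_{0\le i<i'\le n}(v_{i'}-v_{i})=q^{\binom{n+1}{3}}\prod_{0\le i<i'\le n}(\mathcal{A}_{i'}q^{i'}-\mathcal{A}_{i}q^{i})$. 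Hence
\begin{align*}
	H_{n}\big(\widetilde{\alpha}_{k}(x)\big)=q^{\binom{n+1}{3}}\,\eval\Big[\,\prod_{i=0}^{n}\prod_{k=0}^{i-1}(x+\mathcal{A}_{i}q^{k})\cdot\prod_{0\le i<i'\le n}(\mathcal{A}_{i'}q^{i'}-\mathcal{A}_{i}q^{i})\Big].
\end{align*}

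From here the degree and leading coefficient fall out. The first product has $x$-degree $\sum_{i=0}^{n}i=\tfrac{n(n+1)}{2}$ (its leading term being $\prod_{i=0}^{n}x^{i}$, with coefficient $1$) while the Vandermonde-type product is free of $x$, so $H_{n}\big(\widetilde{\alpha}_{k}(x)\big)$ has degree at most $\tfrac{n(n+1)}{2}$ in $x$ and
\begin{align*}
	\big[x^{\frac{n(n+1)}{2}}\big]H_{n}\big(\widetilde{\alpha}_{k}(x)\big)=q^{\binom{n+1}{3}}\,\eval\Big[\,\prod_{0\le i<i'\le n}(\mathcal{A}_{i'}q^{i'}-\mathcal{A}_{i}q^{i})\Big].
\end{align*}
Expanding this Vandermonde product by the Leibniz rule as $\sum_{\sigma\in S_{n+1}}\mathrm{sgn}(\sigma)\,q^{\sum_{i}i\sigma(i)}\prod_{i}\mathcal{A}_{i}^{\sigma(i)}$ and applying $\eval$ — which turns every monomial $\prod_{i}\mathcal{A}_{i}^{\sigma(i)}$ into $\alpha_{0}\alpha_{1}\cdots\alpha_{n}$ — produces $\alpha_{0}\alpha_{1}\cdots\alpha_{n}\det(q^{ij})_{0\le i,j\le n}$, and $\det(q^{ij})_{0\le i,j\le n}$ is the Vandermonde determinant in $q^{0},q^{1},\dots,q^{n}$, namely $\prod_{0\le i<i'\le n}(q^{i'}-q^{i})=(-1)^{\binom{n+1}{2}}q^{\binom{n+1}{3}}\prod_{j=1}^{n}(1-q^{j})^{n+1-j}$ (using $\sum_{0\le i<i'\le n}i=\binom{n+1}{3}$ and grouping by $i'-i$). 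Combining the two occurrences of $q^{\binom{n+1}{3}}$ gives exactly \eqref{eq:LCc2}, and the degree is precisely $\tfrac{n(n+1)}{2}$ whenever $\alpha_{0}\cdots\alpha_{n}\ne0$. The one point requiring care is the umbral bookkeeping in the last displays: $\eval$ must be applied to the determinant regarded as a polynomial in \emph{all} $n+1$ umbrae, so that a monomial in which some $\mathcal{A}_{i}$ is absent still contributes a factor $\alpha_{0}$ (equivalently, $\eval$ multiplies only over pairwise distinct umbrae, not within a single one); this is exactly what forces $\alpha_{0}\alpha_{1}\cdots\alpha_{n}$ rather than $\alpha_{1}\cdots\alpha_{n}$ in \eqref{eq:LCc2}, and a direct check at $n=1$, where $H_{1}\big(\widetilde{\alpha}_{k}(x)\big)=(q-1)\alpha_{0}\alpha_{1}\,x+q\alpha_{0}\alpha_{2}-\alpha_{1}^{2}$, confirms the whole computation. (An alternative, umbra-free route expands $\det(\widetilde{\alpha}_{i+j}(x))$ directly and uses linear independence of the polynomials $j\mapsto\qbinom{i+j}{\ell_{i}}_{q}$ to see that only tuples $(\ell_{0},\dots,\ell_{n})$ permuting $(0,1,\dots,n)$ feed the top power of $x$, but it then still requires the identity $\sum_{\tau\in S_{n+1}}\det\big(\qbinom{i+j}{\tau(i)}_{q}\big)=\det(q^{ij})_{0\le i,j\le n}$, itself best obtained from the factorization above.)
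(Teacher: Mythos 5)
Your argument is correct, and it takes a genuinely different route from the paper's. The paper first reflects, setting $\widetilde{\alpha}'_k(x)=x^k\widetilde{\alpha}_k(x^{-1})$, and then performs $n$ rounds of explicit row operations encoded by the difference operators $\Delta_i$, whose cumulative effect is controlled by an inductive lemma resting on the $q$-Pascal rule; this makes the $i$-th row of the reflected Hankel matrix divisible by $x^i$, which gives the degree bound, and identifies the leading coefficient as $\det_{0\le i,j\le n}\big(q^{ij+\binom{i}{2}}\alpha_i\big)$, evaluated via the Vandermonde determinant $\det(q^{ij})$. You instead attach an independent umbra $\mathcal{A}_i$ to each row, split $\prod_{k=0}^{i+j-1}(x+\mathcal{A}_iq^{k})$ at $k=i$ so that the $j$-independent part factors out of row $i$, and recognize the remaining matrix as a generalized Vandermonde in $v_i=\mathcal{A}_iq^{i}$ (its $j$-th column being a polynomial of degree $j$ in $v$ with $x$-free leading coefficient $q^{\binom{j}{2}}$, so the $x$-dependence of the lower-order column entries is killed by column reduction). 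This yields an exact umbral product formula for the \emph{entire} determinant, from which the degree bound and the coefficient \eqref{eq:LCc2} drop out simultaneously, landing on the same $\det(q^{ij})$ as the paper. Your route buys more — a closed form for all of $H_n(\widetilde{\alpha}_k(x))$ rather than just its top coefficient, with no reflection and no induction — at the cost of the multi-umbra evaluation convention $\eval[\mathcal{A}_i^{0}]=\alpha_0$ for each $i$, which you correctly flag and which is made rigorous by expanding the determinant term by term over permutations (your $n=1$ check confirms the bookkeeping). You are also slightly more careful than the paper in noting that the degree is \emph{exactly} $\tfrac{n(n+1)}{2}$ only when $\alpha_0\alpha_1\cdots\alpha_n\neq 0$.
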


\begin{remark}
	The degree of $H_{n}(\alpha_{k}(x))$ is \emph{not} unexpected as in its determinant expansion, terms are of the form $\alpha_{j_0+0}(x)\alpha_{j_1+1}(x)\cdots \alpha_{j_n+n}(x)$ with $j_0,j_1,\ldots,j_n$ a permutation of $0,1,\ldots,n$, and each is of degree $n(n+1)$. Hence, $H_{n}(\alpha_{k}(x))$ has the \textbf{same} degree as these terms. However, the situation becomes different for $H_{n}(\widetilde{\alpha}_{k}(x))$ as its degree is only $\frac{n(n+1)}{2}$, being \textbf{half} of any term in its expansion. This fact places Theorem \ref{th:LCc2} in the same boat as Theorem \ref{prop:B2k}, and similar cancelations of higher powers of $x$ should happen.
\end{remark}

%\subsection{Outline of the paper}
This paper is organized as follows.
In Section \ref{sec:Preliminaries}, we introduce some preliminary notation and lemmas for our subsequent use. In Section \ref{sec:EvenIndexed}, we establish a general transform for Bernoulli polynomials by means of the theory of umbral calculus, thereby offering a proof of Theorem \ref{prop:B2k}. In Section \ref{sec:QBinomial}, we characterize the leading coefficient in the Hankel determinants related to the $q$-binomial transforms \eqref{eq:alpha-poly-1} and \eqref{eq:alpha-poly-2} by proving Theorems \ref{th:q-alpha-1} and \ref{th:LCc2}. Meanwhile, we specialize our sequence $(\alpha_{k})_{k\ge 0}$ and provide an example with explicit Hankel determinant expressions. % In Section \ref{sec:FurtherDiscussion}, we close this paper with some discussions.

\section{Preliminaries}\label{sec:Preliminaries}

In this section, we shall collect some preliminaries for later use. 
\subsection{Determinants}

We need the following two basic relations on Hankel determinants.
%The proof is simply applying that if the square matrix $B$ is obtained by multiplying one row/column of $A$ by a constant $a$, then $\det(B)=a\det(A)$. 
\begin{lemma}
Let $(a_{k})_{k\ge 0}$ be a sequence and $x$ an indeterminate. Then for $n\geq0$, 
\begin{align}
	H_{n}(xa_{k}) & = x^{n+1}H_{n}(a_{k}),\label{eq:xDet}\\
	H_{n}(x^{k}a_{k}) & = x^{n(n+1)}H_{n}(a_{k}).\label{eq:xkDet}
\end{align}
%where noting that $H_{n}$ is an $(n+1)$ by $(n+1)$ determinant. 
\end{lemma}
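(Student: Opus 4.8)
The two identities in this lemma are both instances of pulling diagonal-type scalings out of a determinant, so the plan is to expand each Hankel determinant via the Leibniz formula
\[
H_{n}(b_{k})=\sum_{\sigma\in S_{n+1}}\operatorname{sgn}(\sigma)\prod_{i=0}^{n}b_{i+\sigma(i)},
\]
and track the extra factors of $x$ that appear when $b_{k}$ is replaced by $xa_{k}$ or $x^{k}a_{k}$.

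For \eqref{eq:xDet}, with $b_{k}=xa_{k}$ each summand picks up exactly one factor of $x$ from each of the $n+1$ positions $(i,\sigma(i))$, so every term carries $x^{n+1}$; factoring this out of the whole sum gives $H_{n}(xa_{k})=x^{n+1}H_{n}(a_{k})$. Equivalently, one can phrase this as multiplying the Hankel matrix on the left by $xI_{n+1}$ and invoking multilinearity in the $n+1$ rows. For \eqref{eq:xkDet}, with $b_{k}=x^{k}a_{k}$ the summand for a permutation $\sigma$ acquires the factor $\prod_{i=0}^{n}x^{\,i+\sigma(i)}=x^{\sum_{i}(i+\sigma(i))}=x^{2\sum_{i=0}^{n}i}=x^{n(n+1)}$, using that $\sum_{i}\sigma(i)=\sum_{i}i=\binom{n+1}{2}$. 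Again this exponent is independent of $\sigma$, so it pulls out of the sum and yields $H_{n}(x^{k}a_{k})=x^{n(n+1)}H_{n}(a_{k})$. A slicker bookkeeping is to write the new Hankel matrix as $D\,(a_{i+j})\,D$ with $D=\operatorname{diag}(1,x,\dots,x^{n})$, so $H_{n}(x^{k}a_{k})=(\det D)^{2}H_{n}(a_{k})=x^{2(0+1+\cdots+n)}H_{n}(a_{k})$.

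There is essentially no obstacle here; the only thing to be slightly careful about is the combinatorial identity $\sum_{i=0}^{n}\bigl(i+\sigma(i)\bigr)=n(n+1)$ holding for every $\sigma\in S_{n+1}$, which is exactly what makes the factor uniform across all Leibniz terms, and the boundary case $n=0$, where both identities reduce to $a_{0}\mapsto xa_{0}$ and $a_{0}\mapsto a_{0}$ respectively and hold trivially.
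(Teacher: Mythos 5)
Your proposal is correct and matches the paper's (very brief) argument, which simply extracts powers of $x$ from rows and columns --- exactly your $D\,(a_{i+j})\,D$ factorization with $D=\operatorname{diag}(1,x,\dots,x^{n})$ for \eqref{eq:xkDet} and the factor $x$ from each of the $n+1$ rows for \eqref{eq:xDet}. The Leibniz-formula bookkeeping is a slightly more explicit rendering of the same idea and is also sound.
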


These can be proved easily by extracting powers of $x$ from rows or columns. In particular, the relation \eqref{eq:xkDet} appeared as \cite[p.~4, Lemma 2.1]{DilcherJiu}.
%\begin{proof}
	%For \eqref{eq:xDet}, we factor out $x$ from each row of the Hankel matrix $(x a_{i+j})_{i,j=0}^{n}$ to recover the original matrix $(a_{i+j})_{i,j=0}^{n}$, and hence obtain the prefactor $x^{\sum_{i=0}^n 1}=x^{n+1}$ for the determinant evaluation. 
 %For \eqref{eq:xkDet}, we first factor out $x^i$ from the $i$-th row of the matrix $(x^{i+j}a_{i+j})_{i,j=0}^{n}$ for each $i$, and then $x^j$ from the $j$-th column for each $j$, thereby deriving the prefactor $x^{\sum_{i=0}^n i}\cdot x^{\sum_{j=0}^n j}=x^{n(n+1)}$.
%\end{proof}

\subsection{The Bernoulli umbra $\mathcal{B}$. }

The \emph{umbral calculus} (see, e.g., \cite{Roman}) associates each term of a sequence $(a_k)_{k\ge 0}$ with the corresponding power $\mathcal{A}^k$ of an \emph{umbra} $\mathcal{A}$. Along this line, we may define the \emph{Bernoulli umbra} $\mathcal{B}$ via a simple evaluation map:
\begin{align*}
	(\mathcal{B}+x)^{k}=\eval\left[(\mathcal{B}+x)^{k}\right]=B_{k}(x).
\end{align*}
For simplicity, the functional ``$\eval$'', short for evaluation, is usually omitted. Its probabilistic formalism \cite[Theorem 2.3]{Dixit} begins with a random variable $L_{B}$ subject to the density function $\frac{\pi}{2}\sech^{2}(\pi x)$. Then $\mathcal{B}=iL_{B}-\frac{1}{2}$, where $i=\sqrt{-1}$ is the imaginary unit. That is,
\begin{align*}
	B_k(x)=\mathbb{E}\left[(\mathcal{B}+x)^{k}\right]=\eval\left[(\mathcal{B}+x)^{k}\right]=\frac{\pi}{2}\int_{\mathbb{R}}\left(x+i\,t-\frac{1}{2}\right)^{k}\sech^{2}(\pi t)\,dt,
\end{align*}
which interprets the evaluation as the expectation operator $\mathbb{E}$, and $B_{k}(x)$ as the $k$-th moment of the random variable $iL_{B}-\frac{1}{2}+x$. For simplification, we shall just write 
\begin{equation}
\mathcal{B}^{k}=B_{k}\quad\text{and}\quad(\mathcal{B}+x)^{k}=B_{k}(x).\label{eq:BernoulliUmbral}
\end{equation}

\begin{remark}
The above random variable interpretation triggers the probabilistic proof in \cite{JiuShi} for the invariance \eqref{eq:inv} of Hankel determinants under the binomial transform. 
\end{remark}

%\subsection{The $q$-analogues}
%
%The following concepts are fundamental in the study of $q$-series:
%
%\begin{definition}
%\label{def:q-analog}
%Assume that $q$ is a complex variable such that $|q|<1$.
%\begin{itemize}[leftmargin=*,align=left]
%\item The \emph{$q$-integers} for $m\in \mathbb{Z}$ are defined by 
%\begin{align*}
%	[m]_{q}:=\frac{1-q^{m}}{1-q};
%\end{align*}
%\item The \emph{$q$-factorials} for $M\in \mathbb{N}$ are defined by
%\begin{align*}
%	[M]_{q}!:=\prod_{m=1}^{M}[m]_{q};
%\end{align*}
%\item The \emph{$q$-binomial coefficients} for $n,k\in \mathbb{Z}$ are defined by 
%\begin{align*}
%	\qbinom{n}{k}_{q}:=\begin{cases}
%		\dfrac{[n]_{q}!}{[k]_{q}![n-k]_{q}!}, & \text{if $0\le k\le n$},\\[10pt]
%		0, & \text{otherwise};
%	\end{cases}
%\end{align*}
%\item The \emph{$q$-Pochhammer symbols} for $N\in\mathbb{N}\cup\{\infty\}$ are defined by 
%\begin{align*}
%	(A;q)_{N}:=\prod_{k=0}^{N-1}(1-Aq^{k}),
%\end{align*}
%with the compact notation 
%\begin{align*}
%	(A_{1},A_{2},\ldots,A_{\ell};q)_{N}:=(A_{1};q)_{N}(A_{2};q)_{N}\cdots(A_{\ell};q)_{N}.
%\end{align*}
%\end{itemize}
%\end{definition}

%In the sequel, we shall require the \emph{$q$-binomial theorem} \cite[p.~421, Eq.~(17.2.35)]{And2010}:
%
%\begin{lemma}
%For all $n\ge 0$,
%\begin{equation}
%\prod_{k=0}^{n-1}(q^{k}\alpha+x)=\sum_{k=0}^{n}q^{\binom{k}{2}}\qbinom{n}{k}_{q}\alpha^{n-k}x^{k}.\label{eq:QBinomial}
%\end{equation}
%\end{lemma}

\subsection{Orthogonal polynomials}

Orthogonal polynomials play a crucial role in the evaluation of Hankel determinants. See Ismail's monograph \cite[Chapter 2]{Ismail} or Krattenthaler's surveys \cite[Section 2.7]{Krattenthaler} and \cite[Section 5.4]{Kra2005} for a comprehensive account. Here we only collect some basics.

\begin{definition}
Given a sequence $(a_{n})_{n\ge 0}$, the family of (monic) polynomials $(p_{n}(z))_{n\ge 0}$ with $\deg p_{n}=n$ is called \emph{orthogonal with respect to $(a_{n})_{n\ge 0}$}, if there exists a linear functional $L$ on the space of polynomials in $z$ such that 
\begin{align*}
	L(z^{n})=a_{n}\quad \text{and} \quad L\big(p_{n}(z)p_{m}(z)\big)=\delta_{n,m}\sigma_{n},
\end{align*}
where $\delta_{n,m}$ is the \emph{Kronecker delta} and $(\sigma_{n})_{n\ge 0}$ is a sequence of nonzero constants. 
\end{definition}

%Note that orthogonal polynomials can be characterized by Favard's Theorem; see, e.g., \cite[p.~21, Thm.~12]{Krattenthaler}.

In view of \emph{Favard's Theorem} \cite[p.~21, Theorem 12]{Krattenthaler}, there is a one-to-one correspondence between orthogonal polynomials and three-term recurrences.

\begin{lemma}[Favard's Theorem]
Let $(p_{n}(z))_{n\ge 0}$ be a family of monic polynomials with $p_{n}(z)$ of degree $n$. Then they are orthogonal if and only if there exist sequences $(u_{n})_{n\ge 0}$ and $(v_{n})_{n\ge 1}$ with $v_{n}$ nonzero such that $p_{0}(z)=1$, $p_{1}(z)=u_{0}+z$, and for $n\ge1$, 
\begin{equation}
p_{n+1}(z)=(u_{n}+z)p_{n}(z)-v_{n}p_{n-1}(z).\label{eq:3Term}
\end{equation}
\end{lemma}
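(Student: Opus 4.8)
The plan is to prove the two implications separately, throughout exploiting the fact that a family of monic polynomials $(p_n)_{n\ge0}$ with $\deg p_n=n$ is a basis of the space of polynomials in $z$, so that every polynomial has a unique expansion in the $p_n$'s.

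For the forward direction (orthogonality $\Rightarrow$ the recurrence \eqref{eq:3Term}), I would begin from the observation that $zp_n(z)$ is monic of degree $n+1$, hence expands uniquely as $zp_n=p_{n+1}+\sum_{k=0}^{n}c_kp_k$. Applying the functional $L$ to $zp_n\cdot p_j$ for $j\le n$ and using $L(p_ip_j)=\delta_{ij}\sigma_j$ gives $c_j\sigma_j=L(zp_n\,p_j)=L(p_n\,zp_j)$; since $zp_j$ has degree $j+1$ and $p_n$ is orthogonal to everything of degree below $n$, this forces $c_j=0$ whenever $j+1<n$. Thus only $c_n$ and $c_{n-1}$ survive, which is exactly \eqref{eq:3Term} with $u_n=-c_n$ and $v_n=c_{n-1}$, while $p_0=1$ and $p_1=u_0+z$ follow from monicity. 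The one point that needs care is $v_n\ne0$: writing $zp_{n-1}=p_n+(\text{lower order})$ yields $c_{n-1}\sigma_{n-1}=L(p_n\,zp_{n-1})=L(p_n^2)=\sigma_n$, so $v_n=\sigma_n/\sigma_{n-1}\ne0$.

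For the converse (recurrence $\Rightarrow$ orthogonality) I would construct the functional explicitly. Since $(p_n)$ is a basis, I define a linear functional $L$ by declaring $L(p_0)=1$ and $L(p_n)=0$ for every $n\ge1$, and I set $\sigma_n:=\prod_{k=1}^{n}v_k$ (with the convention $p_{-1}=0$). The heart of the matter is to establish $L(p_mp_n)=\delta_{mn}\sigma_n$. I would prove the off-diagonal vanishing $L(p_mp_n)=0$ for $m<n$ by induction on the \emph{smaller} index $m$, with hypothesis ``$L(p_ap_b)=0$ for all $a<m$ and all $b>a$.'' The base case $m=0$ reads $L(p_0p_n)=L(p_n)=0$, which is precisely the defining relation; this is where the normalization does its work. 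For the inductive step I would substitute $p_m=(z+u_{m-1})p_{m-1}-v_{m-1}p_{m-2}$ into one factor and rewrite the arising $zp_n$ through \eqref{eq:3Term}; every resulting term then has the form $L(p_{m'}p_{n'})$ with $m'\in\{m-1,m-2\}$ strictly below $m$ and, because $n>m$, with $n'>m'$, so the induction hypothesis kills all of them. The diagonal follows from the same substitution, which reduces to $L(p_n^2)=v_nL(p_{n-1}^2)$, giving $\sigma_n=v_n\sigma_{n-1}$ and hence $\sigma_n\ne0$ since each $v_k\ne0$.

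The step I expect to be the main obstacle is organizing this converse induction correctly. The natural-looking induction on the \emph{larger} index stalls on the near-diagonal entry $L(p_{n}p_{n+1})$ (equivalently $L(z^np_{n+1})$): the three-term recurrence alone relates it only to entries on the same anti-diagonal and produces a tautology, so it cannot be closed without invoking the normalization. Inducting instead on the smaller index sidesteps this entirely, because it routes every product through the defining relations $L(p_k)=0$ at the base case; identifying this as the correct induction variable, and simultaneously carrying the diagonal values $\sigma_n=v_n\sigma_{n-1}$ to guarantee nonvanishing, is the crux of the proof.
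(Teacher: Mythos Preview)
The paper does not prove Favard's Theorem; it quotes it from the literature with a reference to Krattenthaler's survey \cite[p.~21, Theorem 12]{Krattenthaler} and uses it only as a black box in the preliminaries. So there is no ``paper's own proof'' to compare against.

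Your proposal is the standard argument and is correct in both directions. The forward implication is exactly the usual one: expand $zp_n$ in the basis $(p_k)$, pair with $p_j$, and use that $L(p_n\cdot q)=0$ for $\deg q<n$; the computation $v_n=\sigma_n/\sigma_{n-1}$ is the right way to secure $v_n\ne0$. For the converse, your choice to induct on the \emph{smaller} index is indeed the clean organization, and your bookkeeping checks out: after substituting $p_m=(z+u_{m-1})p_{m-1}-v_{m-1}p_{m-2}$ and rewriting $zp_n=p_{n+1}-u_np_n+v_np_{n-1}$, every surviving term $L(p_{m'}p_{n'})$ has $m'\in\{m-1,m-2\}$ and $n'\ge n-1\ge m>m'$, so the hypothesis applies (the convention $p_{-1}=0$ handles $m=1$). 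The diagonal identity $L(p_n^2)=v_nL(p_{n-1}^2)$ then follows by computing $L(zp_{n-1}\cdot p_n)$ two ways, giving $\sigma_n=\prod_{k\le n}v_k\ne0$.
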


The following relation \cite[p.~197, Theorem 51.1]{Wall}, which is attributed to Heilermann \cite{Hei1846}, further connects the Hankel determinants for a sequence with its associated orthogonal polynomials.

\begin{lemma}
Given any sequence $(a_{n})_{n\ge 0}$, let the monic polynomials $(p_{n}(z))_{n\ge 0}$ be orthogonal with respect to $(a_{n})_{n\ge 0}$. Assume further that $(p_{n}(z))_{n\ge 0}$ satisfies the recurrence in \eqref{eq:3Term}. Then
\begin{equation}
H_{n}(a_{k})=a_{0}^{n+1}v_{1}^{n}v_{2}^{n-1}\cdots v_{n}.\label{eq:HankelDet}
\end{equation}
\end{lemma}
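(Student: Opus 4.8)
The plan is to diagonalize the Hankel matrix by the change of basis furnished by the orthogonal polynomials. Write the Hankel matrix as $M=(a_{i+j})_{0\le i,j\le n}$, and expand each monic orthogonal polynomial as $p_{i}(z)=\sum_{k=0}^{i}c_{i,k}z^{k}$ with $c_{i,i}=1$. Collecting these coefficients into the matrix $C=(c_{i,k})_{0\le i,k\le n}$, which is lower triangular with unit diagonal so that $\det C=1$, I would compute the $(i,j)$ entry of $CMC^{\mathsf{T}}$. Since $a_{k+\ell}=L(z^{k}z^{\ell})$ and $L$ is linear, this entry collapses to $L\big(p_{i}(z)p_{j}(z)\big)=\delta_{i,j}\sigma_{i}$. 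Hence $CMC^{\mathsf{T}}=\operatorname{diag}(\sigma_{0},\sigma_{1},\dots,\sigma_{n})$, and taking determinants yields $H_{n}(a_{k})=\det M=\prod_{i=0}^{n}\sigma_{i}$.

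It then remains to express each $\sigma_{i}$ through the recurrence coefficients. First, $\sigma_{0}=L(p_{0}^{2})=L(1)=a_{0}$. Next I would multiply the recurrence \eqref{eq:3Term} by $p_{n-1}(z)$ and apply $L$: the terms $L(p_{n+1}p_{n-1})$ and $u_{n}L(p_{n}p_{n-1})$ vanish by orthogonality, leaving $v_{n}\sigma_{n-1}=L\big(z\,p_{n}(z)p_{n-1}(z)\big)$. Because $z\,p_{n-1}(z)$ is monic of degree $n$, it equals $p_{n}(z)$ plus lower-degree terms, which are annihilated against $p_{n}$, so the right-hand side is exactly $L(p_{n}^{2})=\sigma_{n}$. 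This gives the one-step relation $\sigma_{n}=v_{n}\sigma_{n-1}$, whence $\sigma_{i}=a_{0}\,v_{1}v_{2}\cdots v_{i}$ by induction.

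Substituting back, $H_{n}(a_{k})=\prod_{i=0}^{n}\big(a_{0}\,v_{1}\cdots v_{i}\big)=a_{0}^{n+1}\prod_{j=1}^{n}v_{j}^{\,n+1-j}$, since $v_{j}$ survives in $\sigma_{i}$ precisely for $j\le i\le n$, that is, $n+1-j$ times. This is exactly $a_{0}^{n+1}v_{1}^{n}v_{2}^{n-1}\cdots v_{n}$, as claimed.

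The main obstacle is the second paragraph: correctly extracting $\sigma_{n}=v_{n}\sigma_{n-1}$ from the three-term recurrence, since it hinges on the observation that $z\,p_{n-1}$ differs from $p_{n}$ only by terms orthogonal to $p_{n}$. The matrix factorization itself is essentially formal once the linearity of $L$ is invoked, and the final bookkeeping of exponents is routine.
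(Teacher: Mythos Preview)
Your argument is correct. The diagonalization $CMC^{\mathsf{T}}=\operatorname{diag}(\sigma_{0},\dots,\sigma_{n})$ via the lower-unitriangular change of basis, together with the derivation $\sigma_{n}=v_{n}\sigma_{n-1}$ from the three-term recurrence, is the standard and clean route to this identity.

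Note, however, that the paper does \emph{not} prove this lemma at all: it is stated with a reference to Wall's book and attributed to Heilermann, and is then used as a black box. So there is no ``paper's own proof'' to compare against here; your write-up simply supplies a self-contained justification where the paper opted for a citation.
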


Finally, we recall the \emph{big $q$-Jacobi polynomials} introduced by Andrews and Askey \cite{AndrewsAskey}, which form a family of basic hypergeometric orthogonal polynomials in the basic Askey scheme. To begin with, we need the \emph{$q$-hypergeometric series}:
\begin{align*}
	{}_{r+1}\phi_{r}\left(\begin{matrix}a_{1},a_{2},\ldots,a_{r+1}\\b_{1},b_{2},\ldots,b_{r}
	\end{matrix};q,t\right):=\sum_{n=0}^{\infty}\frac{(a_{1},a_{2},\ldots,a_{r+1};q)_{n}t^{n}}{(q,b_{1},b_{2},\ldots,b_{r};q)_{n}}.
\end{align*}

\begin{definition}
The \emph{big $q$-Jacobi polynomials} are defined
by 
\begin{equation}
P_{n}(z;a,b,c;q):={}_{3}\phi_{2}\left(\begin{matrix}q^{-n},abq^{n+1},z\\
aq,cq
\end{matrix};q,q\right).\label{eq:BigQJacobi}
\end{equation}
\end{definition}

The associated three-term recurrence for the big $q$-Jacobi polynomials is well-known; see, e.g., \cite[p.~438, Eq.~(14.5.3)]{KLS2010}.

\begin{lemma}\label{le:Jacobi-rec}
For $n\ge 1$,
\begin{align}
    (z-1)P_{n}(z;a,b,c;q)&=A_{n}P_{n+1}(z;a,b,c;q)-(A_{n}+C_{n})P_{n}(z;a,b,c;q)\notag\\
    &\quad+C_{n}P_{n-1}(z;a,b,c;q),
\end{align}
where
\begin{align*}
	A_{n}= & \frac{(1-aq^{n+1})(1-cq^{n+1})(1-abq^{n+1})}{(1-abq^{2n+1})(1-abq^{2n+2})},\\
	C_{n}= & -acq^{n+1}\frac{(1-q^{n})(1-bq^{n})(1-abc^{-1}q^{n})}{(1-abq^{2n})(1-abq^{2n+1})}.
\end{align*}
\end{lemma}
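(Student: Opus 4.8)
The plan is to work in the basis $\{(z;q)_k\}_{k\ge 0}$ rather than the monomial basis, since there both the big $q$-Jacobi polynomials and the multiplication-by-$z$ operator take a particularly clean form. Reading off the terminating ${}_3\phi_2$ in \eqref{eq:BigQJacobi}, I would first write
\begin{equation*}
	P_{n}(z;a,b,c;q)=\sum_{k=0}^{n}d_{n,k}\,(z;q)_{k},\qquad d_{n,k}:=\frac{(q^{-n},abq^{n+1};q)_{k}}{(q,aq,cq;q)_{k}}\,q^{k},
\end{equation*}
noting that $d_{n,k}=0$ for $k>n$ because of the factor $(q^{-n};q)_{k}$, so that $P_{n}$ has degree $n$. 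Two observations set up the whole argument. First, since $(1;q)_{k}=0$ for $k\ge 1$, only the $k=0$ term survives at $z=1$, giving $P_{m}(1)=d_{m,0}=1$ for every $m$; evaluating any candidate relation $zP_{n}=\alpha_{n}P_{n+1}+\beta_{n}P_{n}+\gamma_{n}P_{n-1}$ at $z=1$ then forces $\beta_{n}=1-\alpha_{n}-\gamma_{n}$, which is exactly the assertion that the middle coefficient equals $-(A_{n}+C_{n})$. Second, from $(z;q)_{k+1}=(z;q)_{k}(1-zq^{k})$ one obtains the key action $z\,(z;q)_{k}=q^{-k}\big[(z;q)_{k}-(z;q)_{k+1}\big]$.

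Applying this action termwise, I would expand
\begin{equation*}
	(z-1)P_{n}(z)=\sum_{k\ge 0}\Big[d_{n,k}\big(q^{-k}-1\big)-q^{-(k-1)}d_{n,k-1}\Big](z;q)_{k}
\end{equation*}
(with $d_{n,-1}:=0$), while the right-hand side $A_{n}P_{n+1}-(A_{n}+C_{n})P_{n}+C_{n}P_{n-1}$ is already in the basis with coefficient $A_{n}d_{n+1,k}-(A_{n}+C_{n})d_{n,k}+C_{n}d_{n-1,k}$ of $(z;q)_{k}$. Equating coefficients of $(z;q)_{k}$ reduces the recurrence to the family of scalar identities
\begin{equation*}
	d_{n,k}\big(q^{-k}-1\big)-q^{-(k-1)}d_{n,k-1}=A_{n}d_{n+1,k}-(A_{n}+C_{n})d_{n,k}+C_{n}d_{n-1,k},\qquad 0\le k\le n+1.
\end{equation*}
The top case $k=n+1$, where only $d_{n,n}$ and $d_{n+1,n+1}$ are nonzero, isolates $A_{n}=-q^{-n}d_{n,n}/d_{n+1,n+1}$; the case $k=0$ is the trivial identity $0=0$ already accounted for by $P_{m}(1)=1$; and the case $k=1$, which is linear in $C_{n}$ once $A_{n}$ is known, determines $C_{n}$.

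From here the proof is a matter of simplifying ratios of $q$-Pochhammer symbols: evaluating $d_{n,n}/d_{n+1,n+1}$ and the shifted ratios $d_{n\pm1,k}/d_{n,k}$, $d_{n,k-1}/d_{n,k}$ through elementary relations such as $(q^{-n};q)_{n}=(-1)^{n}q^{-\binom{n+1}{2}}(q;q)_{n}$, and checking that the resulting closed forms coincide with the asserted $A_{n}$ and $C_{n}$. Because the big $q$-Jacobi polynomials are orthogonal \cite{AndrewsAskey}, Favard's Theorem guarantees that such a three-term recurrence exists with uniquely determined coefficients, so it in fact suffices to verify the two matchings $k=n+1$ and $k=1$ together with $P_{m}(1)=1$; alternatively, one checks the displayed scalar identity for all $k$ directly, which after clearing the quadratic denominators $(1-abq^{2n+1})(1-abq^{2n+2})$ and $(1-abq^{2n})(1-abq^{2n+1})$ becomes a single rational-function identity in $q^{n}$ and $q^{k}$. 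I expect this final bookkeeping — keeping the $q^{-k}$ weights and the shifted Pochhammer ratios aligned while reproducing the exact numerators of $A_{n}$ and $C_{n}$ — to be the main obstacle, though it is mechanical $q$-algebra with no conceptual difficulty.
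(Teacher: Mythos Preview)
The paper does not prove this lemma: it simply records the three-term recurrence as a known fact and cites \cite[p.~438, Eq.~(14.5.3)]{KLS2010}. Your proposal therefore goes well beyond what the paper does, supplying an actual derivation where the paper only quotes.

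Your route is sound. Writing $P_{n}$ in the $(z;q)_{k}$-basis is exactly the right move, since the ${}_{3}\phi_{2}$ in \eqref{eq:BigQJacobi} hands you the coefficients $d_{n,k}$ for free, and the identity $z\,(z;q)_{k}=q^{-k}\big[(z;q)_{k}-(z;q)_{k+1}\big]$ is correct and makes the action of multiplication by $z$ bidiagonal in that basis. The normalization $P_{m}(1)=1$ and the top-degree match at $k=n+1$ are both clean; in particular, your formula $A_{n}=-q^{-n}d_{n,n}/d_{n+1,n+1}$ does simplify to the stated $A_{n}$ after using $(q^{-m};q)_{m}=(-1)^{m}q^{-\binom{m+1}{2}}(q;q)_{m}$ and telescoping the $(abq^{n+1};q)$-factors. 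The appeal to uniqueness is also fine even without Favard: $P_{n-1},P_{n},P_{n+1}$ have distinct degrees, so once orthogonality guarantees that $zP_{n}$ lies in their span, the coefficients are determined by any three independent evaluations, and you have chosen the three simplest ones. The one place to be a bit more careful is the $k=1$ extraction of $C_{n}$: the algebra there brings in the factor $(1-abc^{-1}q^{n})$ via the ratio $d_{n-1,1}/d_{n,1}$, and it is worth writing that ratio out explicitly rather than leaving it as ``mechanical,'' since a sign slip in the $acq^{n+1}$ prefactor is easy to make. But this is bookkeeping, not a gap.
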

From \cite[p.~438, Eq.~(14.5.4)]{KLS2010}, we find the recurrence for the normalized sequence 
\begin{equation*}
    Q_{n}(z;a,b,c;q):=\frac{(aq,cq;q)_{n}}{(abq^{n+1};q)_{n}}P_{n}(z;a,b,c;q).
\end{equation*}
\begin{corollary}
For $n\ge 1$,
\begin{align}
    Q_{n+1}(z;a,b,c;q)&=(z+A_{n}+C_{n}-1)Q_{n}(z;a,b,c;q)\notag\\
    &\quad -A_{n-1}C_{n}Q_{n-1}(z;a,b,c;q).\label{eq:3TermQn}
\end{align}
\end{corollary}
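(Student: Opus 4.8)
The plan is to obtain \eqref{eq:3TermQn} directly from Lemma~\ref{le:Jacobi-rec} by inserting the normalization and doing nothing more than $q$-Pochhammer bookkeeping. Set
\begin{align*}
\lambda_{n}:=\frac{(aq,cq;q)_{n}}{(abq^{n+1};q)_{n}},
\end{align*}
so that $Q_{n}(z;a,b,c;q)=\lambda_{n}P_{n}(z;a,b,c;q)$, i.e.\ $P_{n}=\lambda_{n}^{-1}Q_{n}$; note $\lambda_{0}=1$, hence $Q_{0}=P_{0}=1$. Substituting $P_{m}=\lambda_{m}^{-1}Q_{m}$ for $m=n-1,n,n+1$ into the recurrence of Lemma~\ref{le:Jacobi-rec} and multiplying through by $\lambda_{n}$ yields
\begin{align*}
(z-1)Q_{n}=A_{n}\frac{\lambda_{n}}{\lambda_{n+1}}Q_{n+1}-(A_{n}+C_{n})Q_{n}+C_{n}\frac{\lambda_{n}}{\lambda_{n-1}}Q_{n-1},
\end{align*}
which rearranges to
\begin{align*}
A_{n}\frac{\lambda_{n}}{\lambda_{n+1}}Q_{n+1}=(z+A_{n}+C_{n}-1)Q_{n}-C_{n}\frac{\lambda_{n}}{\lambda_{n-1}}Q_{n-1}.
\end{align*}

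Thus the corollary reduces to the single identity $\lambda_{n+1}/\lambda_{n}=A_{n}$, valid for all $n\ge 0$: granting it, the coefficient of $Q_{n+1}$ collapses to $A_{n}\cdot A_{n}^{-1}=1$, while the coefficient of $Q_{n-1}$ becomes $-C_{n}\cdot(\lambda_{n}/\lambda_{n-1})=-C_{n}A_{n-1}$, which is exactly \eqref{eq:3TermQn}. To verify the identity one computes $\frac{(aq;q)_{n+1}}{(aq;q)_{n}}=1-aq^{n+1}$, similarly $\frac{(cq;q)_{n+1}}{(cq;q)_{n}}=1-cq^{n+1}$, and for the denominator
\begin{align*}
\frac{(abq^{n+2};q)_{n+1}}{(abq^{n+1};q)_{n}}=\frac{\prod_{j=n+2}^{2n+2}(1-abq^{j})}{\prod_{j=n+1}^{2n}(1-abq^{j})}=\frac{(1-abq^{2n+1})(1-abq^{2n+2})}{1-abq^{n+1}},
\end{align*}
whence $\lambda_{n+1}/\lambda_{n}=\dfrac{(1-aq^{n+1})(1-cq^{n+1})(1-abq^{n+1})}{(1-abq^{2n+1})(1-abq^{2n+2})}=A_{n}$.

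The only delicate point is the denominator $(abq^{n+1};q)_{n}$, whose argument depends on $n$ both in the base $abq^{n+1}$ and in the length $n$: writing it explicitly as $\prod_{j=n+1}^{2n}(1-abq^{j})$ and carefully tracking how the index range shifts under $n\mapsto n+1$ is the crux of the computation, but it is entirely mechanical and requires no input beyond Lemma~\ref{le:Jacobi-rec}.
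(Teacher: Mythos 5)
Your proof is correct and takes essentially the same route as the paper, which obtains \eqref{eq:3TermQn} by citing the normalized recurrence from Koekoek--Lesky--Swarttouw rather than computing it; your argument is precisely the verification behind that citation, and the key identity $\lambda_{n+1}/\lambda_{n}=A_{n}$ is checked correctly, so the coefficients collapse to $1$ and $-A_{n-1}C_{n}$ as required.
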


In view of the orthogonality of the big $q$-Jacobi polynomials, we define the following sequence.

\begin{definition}
    Let $(\mu_{n}(a,b,c))_{n\ge 0}$ with $\mu_{0}(a,b,c)=1$ be a sequence with respect to which $P_{n}(z;a,b,c;q)$ is orthogonal.
\end{definition}

\begin{remark}\label{rmk:mu-Q}
By definition, the normalized polynomials $Q_{n}(z;a,b,c;q)$ are also orthogonal with respect to $(\mu_{n}(a,b,c))_{n\ge 0}$.
\end{remark}

To close this section, we note from \cite[p.~529, Theorem 3.1]{IsmailStanton} that $\mu_{n}(a,b,c)$ can be expressed in terms of the \emph{Al-Salam--Chihara
polynomials} \cite[Section 14.8]{KLS2010}.

\begin{definition}
The \emph{Al-Salam--Chihara polynomials} are defined by
\begin{equation}
p_{n}(\theta,t_{1},t_{2}):={}_{3}\phi_{2}\left(\begin{matrix}q^{-n},t_{1}e^{i\theta},t_{2}e^{-i\theta}\\
t_{1}t_{2},0
\end{matrix};q,q\right).\label{eq:Al-Salam-Chihara-Polynomial}
\end{equation}
\end{definition}

\begin{lemma}
\label{thm:MomentsBigQISAl-Salam-Chihara}
Let $a=t_{1}e^{i\theta}/q$,
$b=t_{2}e^{-i\theta}/q$, and $c=t_{1}e^{-i\theta}/q$. Then
\begin{align}
    \mu_{n}(a,b,c)=p_{n}(\theta,t_{1},t_{2}).
\end{align}
\end{lemma}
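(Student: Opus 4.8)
This evaluation is due to Ismail and Stanton \cite[Theorem 3.1]{IsmailStanton}, so one acceptable route is to transcribe their statement into the present notation and check that the substitution $a=t_1e^{i\theta}/q$, $b=t_2e^{-i\theta}/q$, $c=t_1e^{-i\theta}/q$ is the one that makes the two families agree. For a self-contained argument I would proceed as follows. Let $L$ be the linear functional attached to the big $q$-Jacobi polynomials, so $L(z^n)=\mu_n(a,b,c)$ and $L(1)=\mu_0(a,b,c)=1$; orthogonality of $(P_n)_{n\ge 0}$ says exactly that $L\bigl(P_m(z;a,b,c;q)\bigr)=0$ for all $m\ge1$, and since $\deg P_m=m$ and $P_0=1$, these relations together with $L(1)=1$ determine $(\mu_n)_{n\ge 0}$ uniquely. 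Hence it suffices to show that the functional $\widetilde L$ defined by $\widetilde L(z^n):=p_n(\theta,t_1,t_2)$ satisfies $\widetilde L(1)=p_0=1$ and $\widetilde L(P_m)=0$ for every $m\ge1$ under the stated substitution; then $\widetilde L=L$ and $p_n=\widetilde L(z^n)=\mu_n(a,b,c)$.

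The cleanest way to test this is in the basis of $q$-shifted factorials $(z;q)_j$, since $P_m(z;a,b,c;q)={}_3\phi_2(q^{-m},abq^{m+1},z;aq,cq;q,q)$ is already a finite linear combination of the $(z;q)_j$ with explicit $q$-hypergeometric coefficients. The plan is: first evaluate $\widetilde L\bigl((z;q)_j\bigr)$ in closed form, then substitute into the $(z;q)_j$-expansion of $P_m$ and sum over $j$. For the first step, expand $(z;q)_j$ into monomials by the $q$-binomial theorem, apply $\widetilde L$ termwise using the ${}_3\phi_2$-form of the Al-Salam--Chihara polynomials, interchange the two summations, and collapse the inner one by a terminating instance of the $q$-binomial theorem (a $q$-analogue of $\sum_i(-1)^i\binom{j}{i}\binom{i}{\ell}=(-1)^{\ell}[j=\ell]$); this leaves a quotient of $q$-Pochhammer symbols in $t_1e^{i\theta}$, $t_2e^{-i\theta}$ and $t_1t_2$. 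For the second step, under the substitution this quotient cancels against part of the coefficients of $P_m$, and $\widetilde L(P_m)$ reduces to a single terminating $q$-hypergeometric series that a classical $q$-summation theorem (of $q$-Chu--Vandermonde or $q$-Pfaff--Saalsch\"utz type) evaluates to $0$ for $m\ge1$ and to $1$ for $m=0$ --- that is, $\widetilde L(P_m)=\delta_{m,0}$, as required.

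I expect the technical heart to lie in the second step: arranging the $q$-Pochhammer bookkeeping so that the reduced series matches a known summation and so that its value genuinely vanishes for every $m\ge1$ --- equivalently, pinning down why the parametrization $a=t_1e^{i\theta}/q$, etc., is precisely the one forcing this collapse. An equivalent but more computational alternative avoids the functional-equation device and instead writes $\mu_n(a,b,c)$ directly from the explicit Jackson-$q$-integral orthogonality measure of the big $q$-Jacobi polynomials, evaluates the resulting $q$-integral (carrying the extra monomial weight $z^n$) by the Andrews--Askey $q$-beta integral, and simplifies the outcome to the ${}_3\phi_2$ defining $p_n(\theta,t_1,t_2)$. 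Either way, the orthogonal-polynomial machinery of Section~\ref{sec:Preliminaries} reduces the lemma to one concrete $q$-series evaluation.
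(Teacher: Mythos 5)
The paper does not prove this lemma at all: it simply imports it as \cite[p.~529, Theorem~3.1]{IsmailStanton}, so the first route you describe --- transcribing Ismail and Stanton's statement and matching the parametrization $a=t_1e^{i\theta}/q$, $b=t_2e^{-i\theta}/q$, $c=t_1e^{-i\theta}/q$ --- is exactly what the paper does, and is sufficient here. Your additional self-contained outline is structurally sound: since $P_0=1$ and $\deg P_m=m$, the conditions $L(1)=1$ and $L(P_m)=0$ for $m\ge 1$ do determine the moment sequence uniquely, so the lemma is indeed equivalent to the single identity $\widetilde L(P_m)=\delta_{m,0}$, and reducing that to a terminating $q$-summation via the $(z;q)_j$-basis is the standard Ismail--Stanton style of argument (their own proof proceeds by closely related generating-function and connection-coefficient manipulations). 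The only caveat is that, as written, the decisive step --- the explicit evaluation of $\widetilde L\bigl((z;q)_j\bigr)$ and the vanishing of the resulting ${}_3\phi_2$ for $m\ge 1$ --- is predicted rather than carried out, so your sketch would not yet stand alone as a proof; but since the paper itself treats the result as quoted, nothing more is required.
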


\section{Even-indexed Bernoulli polynomials}\label{sec:EvenIndexed}

Our starting point is the following identity for Bernoulli polynomials.

\begin{lemma}
	For any indeterminates $a$ and $b$,
	\begin{equation}\label{eq:B2kBn+k}
		\sum_{k=0}^{n}\binom{n}{k}B_{2k}(ax+b)(-b^{2})^{n-k}=\sum_{k=0}^{n}\binom{n}{k}B_{n+k}(ax)(2b)^{n-k}.
	\end{equation}
\end{lemma}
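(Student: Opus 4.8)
The plan is to verify the identity \eqref{eq:B2kBn+k} by comparing generating functions, using the umbral formalism \eqref{eq:BernoulliUmbral} that was set up above. Writing $\mathcal{B}$ for the Bernoulli umbra, so that $B_k(y)=(\mathcal{B}+y)^k$ after evaluation, the left-hand side is
\begin{align*}
	\sum_{k=0}^{n}\binom{n}{k}(\mathcal{B}+ax+b)^{2k}(-b^{2})^{n-k}
	=\big((\mathcal{B}+ax+b)^{2}-b^{2}\big)^{n}
	=\big((\mathcal{B}+ax)^{2}+2b(\mathcal{B}+ax)\big)^{n},
\end{align*}
where the middle step is just the binomial theorem applied umbrally (legitimate since $\eval$ is linear) and the last step expands the square $(\mathcal{B}+ax+b)^2=(\mathcal{B}+ax)^2+2b(\mathcal{B}+ax)+b^2$ and cancels the $b^2$. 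On the other hand, the right-hand side is
\begin{align*}
	\sum_{k=0}^{n}\binom{n}{k}(\mathcal{B}+ax)^{n+k}(2b)^{n-k}
	=(\mathcal{B}+ax)^{n}\sum_{k=0}^{n}\binom{n}{k}(\mathcal{B}+ax)^{k}(2b)^{n-k}
	=(\mathcal{B}+ax)^{n}\big((\mathcal{B}+ax)+2b\big)^{n}.
\end{align*}
These two umbral expressions are literally the same polynomial in the symbol $\mathcal{B}$ (and in $ax$, $b$), namely $\big((\mathcal{B}+ax)^2+2b(\mathcal{B}+ax)\big)^n$, so applying $\eval$ to both gives the claimed equality.

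To make this fully rigorous without invoking the umbral shorthand, I would instead run the same computation at the level of exponential generating functions, or equivalently prove the polynomial identity
\begin{align*}
	\sum_{k=0}^{n}\binom{n}{k}B_{2k}(u)(-b^{2})^{n-k}=\sum_{k=0}^{n}\binom{n}{k}B_{n+k}(u-b)(2b)^{n-k}
\end{align*}
in the single variable $u$ (then substitute $u=ax+b$). Using $\sum_k B_k(y)\frac{t^k}{k!}=\frac{te^{yt}}{e^t-1}$, one checks that the EGF in $t$ of $B_k(y)$ paired against a convolution with $\binom{n}{k}$ produces exactly the stated combination after matching coefficients; the underlying algebraic fact is the finite-difference identity $\sum_{k}\binom{n}{k}c^{n-k}p_{2k}=\text{(something)}$ for a sequence $p_k=(\mathcal B+y)^k$, which is precisely the binomial-theorem manipulation above. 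Either route is short.

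The only point requiring a word of care — the "main obstacle," such as it is — is justifying the umbral manipulations: that one may freely apply the binomial theorem to powers of $(\mathcal B + y)$ inside $\eval$, and in particular that $\eval\big[(\mathcal B+y)^{2}-b^{2}\big)^{n}\big]$ may be expanded term by term. This is immediate from linearity of $\eval$ together with the defining relation $\eval[(\mathcal B+y)^k]=B_k(y)$, but since the identity mixes the even-index substitution $2k$ with a shift in the argument, it is worth writing out explicitly that $\eval$ commutes with the two algebraic rearrangements (completing the square, and factoring out $(\mathcal B+ax)^n$). Once that is spelled out, the proof is complete.
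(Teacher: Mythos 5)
Your proof is correct and follows essentially the same route as the paper: both arguments pass to the Bernoulli umbra, apply the binomial theorem to collapse the left-hand sum to $\big((\mathcal{B}+ax+b)^2-b^2\big)^n$, identify this with $(\mathcal{B}+ax)^n(\mathcal{B}+ax+2b)^n$ (you write the equivalent expanded form $\big((\mathcal{B}+ax)^2+2b(\mathcal{B}+ax)\big)^n$), and re-expand to obtain the right-hand sum. The justification you flag --- that $\eval$ is linear and hence commutes with these polynomial rearrangements in the symbol $\mathcal{B}$ --- is exactly the point the paper relies on via \eqref{eq:BernoulliUmbral}.
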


\begin{proof}
	We make use of the Bernoulli umbra $\mathcal{B}$ and derive that
	\begin{align*}
		\sum_{k=0}^{n}\binom{n}{k}B_{2k}(ax+b)(-b^{2})^{n-k}  &=\sum_{k=0}^{n}\binom{n}{k}(\mathcal{B}+ax+b)^{2k}(-b^{2})^{n-k}\\
		 &=\left((\mathcal{B}+ax+b)^{2}-b^{2}\right)^{n} \\
		 &=(\mathcal{B}+ax)^{n}(\mathcal{B}+ax+2b)^{n} \\
   &=\sum_{k=0}^{n}\binom{n}{k}(\mathcal{B}+ax)^{n+k}(2b)^{n-k} \\
   &=\sum_{k=0}^{n}\binom{n}{k}B_{n+k}(ax)(2b)^{n-k}.
	\end{align*}
	In particular, for the first equality, $B_{2k}(ax+b)$ is replaced with $(\mathcal{B}+ax+b)^{2k}$, and for the last equality, $(\mathcal{B}+ax)^{n+k}$ is replaced with $B_{n+k}(ax)$, both by means of \eqref{eq:BernoulliUmbral}.
\end{proof}

In \cite[p.~390, Eq.~(42)]{Chen}, Chen established the following relation on the Hankel determinants for the \emph{median Bernoulli numbers} $K_k$,
\begin{equation}\label{eq:KB}
	H_{n}(K_{k})=\big({-\tfrac{1}{2}}\big)^{n+1}H_{n}\big(B_{2k}\big(\tfrac{1}{2}\big)\big),
\end{equation}
wherein
\begin{align*}
	K_{k}:=-\frac{1}{2}\sum_{\ell=0}^{k}\binom{k}{\ell}B_{k+\ell}.
\end{align*}
Now, \eqref{eq:B2kBn+k} gives a generalization of Chen's relation \eqref{eq:KB} as follows.

\begin{corollary}
	For any indeterminates $a$ and $b$, define 
	\begin{align*}
		K_{k}^{a,b}(x):=\sum_{\ell=0}^{k}\binom{k}{\ell}B_{k+\ell}(ax)(2b)^{k-\ell}.
	\end{align*}
	Then
	\begin{equation}
		H_{n}(K_{k}^{a,b}(x))=H_{n}(B_{2k}(ax+b)).\label{eq:KabxB}
	\end{equation}
\end{corollary}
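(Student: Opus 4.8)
The plan is to deduce \eqref{eq:KabxB} directly from the umbral identity \eqref{eq:B2kBn+k} together with the Hankel-determinant manipulations in \eqref{eq:xDet} and \eqref{eq:xkDet}. First I would observe that \eqref{eq:B2kBn+k}, read with the substitution pattern of its two sides, asserts precisely that the sequence $\big(B_{2k}(ax+b)\big)_{k\ge 0}$ is the binomial-type transform of the sequence $\big(B_{n+k}(ax)(2b)^{?}\big)$; more carefully, fixing the row index $n$ in \eqref{eq:B2kBn+k} and letting $n$ range, the left side is $\sum_{k}\binom{n}{k}B_{2k}(ax+b)(-b^{2})^{n-k}$ while the right side is $\sum_{k}\binom{n}{k}B_{n+k}(ax)(2b)^{n-k}=K_{n}^{a,b}(x)$. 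So \eqref{eq:B2kBn+k} says
\begin{equation*}
	\sum_{k=0}^{n}\binom{n}{k}(-b^{2})^{n-k}\,B_{2k}(ax+b)=K_{n}^{a,b}(x).
\end{equation*}

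Next I would encode this as a matrix factorization. Let $M$ be the lower-triangular matrix with entries $M_{n,k}=\binom{n}{k}(-b^{2})^{n-k}$ for $0\le k\le n\le N$, which has all diagonal entries $1$ and hence determinant $1$. The displayed relation says that the vector $\big(K_{n}^{a,b}(x)\big)_{n}$ equals $M$ applied to the vector $\big(B_{2k}(ax+b)\big)_{k}$. The standard fact here (the same one underlying Theorem \ref{prop:BinomialTransform}) is that if $(c_{k})$ and $(d_{k})$ are two sequences related by $c_{n}=\sum_{k}\binom{n}{k}t^{n-k}d_{k}$ for a scalar $t$, then $H_{n}(c_{k})=H_{n}(d_{k})$: indeed the Hankel matrix of $(c_k)$ equals $M H M^{\mathsf T}$ where $M$ is the unit lower-triangular binomial matrix above and $H$ is the Hankel matrix of $(d_k)$, so the determinants agree. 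Applying this with $t=-b^{2}$, $c_{k}=K_{k}^{a,b}(x)$, and $d_{k}=B_{2k}(ax+b)$ yields \eqref{eq:KabxB} immediately.

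Alternatively, and perhaps cleaner given that the paper emphasizes the umbral viewpoint, I would argue entirely at the level of the linear functional: $B_{2k}(ax+b)=\eval\big[(\mathcal B+ax+b)^{2k}\big]$ is the $k$-th moment of the ``squared'' umbra $u:=(\mathcal B+ax+b)^{2}$, while the umbral computation inside the proof of \eqref{eq:B2kBn+k} shows $u-b^{2}=(\mathcal B+ax)(\mathcal B+ax+2b)$ has $n$-th moment $\sum_{k}\binom{n}{k}(\mathcal B+ax)^{n+k}(2b)^{n-k}=K_{n}^{a,b}(x)$. So $K_{k}^{a,b}(x)$ is the moment sequence of $u-b^{2}$, and shifting a variable by a constant is exactly a binomial transform of its moment sequence, under which Hankel determinants are invariant by Theorem \ref{prop:BinomialTransform} (in its moment-sequence form, cf.\ the remark citing \cite{JiuShi}). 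Either route gives \eqref{eq:KabxB}.

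I do not expect a genuine obstacle here; the content of the corollary is essentially a repackaging of \eqref{eq:B2kBn+k} plus invariance under binomial transforms, and the only thing to be careful about is bookkeeping: making sure the role of the "running" index $n$ in \eqref{eq:B2kBn+k} is correctly identified with the sequence index, and that the scalar $t=-b^{2}$ (rather than $x$) is the shift parameter, so that the invariance result applies verbatim. One should also note in passing that \eqref{eq:KabxB} does specialize to Chen's \eqref{eq:KB}: taking $a=0$, $b=\tfrac12$ gives $K_{k}^{0,1/2}(x)=\sum_{\ell}\binom{k}{\ell}B_{k+\ell}=-2K_{k}$, whence $H_{n}(K_{k}^{0,1/2}(x))=(-2)^{n+1}H_{n}(K_{k})$, matching \eqref{eq:KB} after applying \eqref{eq:xDet}; this sanity check is worth including to justify the word "generalization."
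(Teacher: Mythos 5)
Your proposal is correct and follows essentially the same route as the paper: use \eqref{eq:B2kBn+k} to recognize $K_{k}^{a,b}(x)$ as the binomial transform of $\big(B_{2k}(ax+b)\big)_{k\ge 0}$ with shift parameter $-b^{2}$, then invoke the invariance of Hankel determinants under binomial transforms (Theorem \ref{prop:BinomialTransform}). The additional material (the explicit $MHM^{\mathsf T}$ factorization, the umbral rephrasing, and the sanity check recovering \eqref{eq:KB}) is sound but only elaborates on what the paper leaves to the cited invariance result.
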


\begin{proof}
    Note that \eqref{eq:B2kBn+k} allows us to rewrite $K_{k}^{a,b}(x)$ as
    \begin{align*}
        K_{k}^{a,b}(x) = \sum_{\ell=0}^{k}\binom{k}{\ell}B_{2\ell}(ax+b)(-b^2)^{k-\ell}.
    \end{align*}
    Applying Theorem \ref{prop:BinomialTransform} with the specialization $a_k\mapsto B_{2k}(ax+b)$ and $x\mapsto -b^2$ yields the desired evaluation.
\end{proof}

It is an easy observation that to show Theorem \ref{prop:B2k}, especially \eqref{eq:B2k}, it suffices to confirm that the leading coefficient of $H_{n}\big(B_{2k}(\frac{1+x}{2})\big)$ equals $H_{n}(B_{k})$ by recalling \eqref{eq:Bk}. Now our objective is to prove this claim, thereby offering an assertion of Theorem \ref{prop:B2k}.

\begin{proof}[Proof of Theorem \ref{prop:B2k}]
	Letting $a\mapsto \frac{1}{2x}$ and $b\mapsto \frac{1}{2x}$ in \eqref{eq:KabxB}, we have 
	\begin{align*}
		H_{n}\left(B_{2k}\left(\frac{1+x^{-1}}{2}\right)\right)=H_{n}\left(\sum_{\ell=0}^{k}\binom{k}{\ell}B_{k+\ell}\big(\tfrac{1}{2}\big)\left(\frac{1}{x}\right)^{k-\ell}\right).
	\end{align*}
	Note that for the two sequences
	\begin{align*}
		\left(B_{2k}\left(\frac{1+x^{-1}}{2}\right)\right)_{k\ge 0} \qquad\text{and}\qquad \left(\sum_{\ell=0}^{k}\binom{k}{\ell}B_{k+\ell}\big(\tfrac{1}{2}\big)\left(\frac{1}{x}\right)^{k-\ell}\right)_{k\ge 0},
	\end{align*}
	which share the same Hankel determinants, if we simultaneously multiply the $k$-th term of them by $x^{k}$, then \eqref{eq:xkDet} asserts that the new Hankel determinants are still identical. Thus,
	\begin{align*}
		H_{n}\left(\sum_{\ell=0}^{k}\binom{k}{\ell}B_{k+\ell}\left(\frac{1}{2}\right)\,x^{\ell}\right)&=H_{n}\left(x^{k}B_{2k}\left(\frac{1+x^{-1}}{2}\right)\right)\\
        &=x^{n(n+1)}H_{n}\left(B_{2k}\left(\frac{1+x^{-1}}{2}\right)\right).
	\end{align*}
	We then observe that $H_{n}\big(B_{2k}\big(\frac{1+x^{-1}}{2}\big)\big)$ are \emph{polynomials} in $x^{-1}$. Assuming that the degree of $H_{n}\big(B_{2k}\big(\frac{1+x}{2}\big)\big)$ is $d>n(n+1)$, 
%	then in $H_{n}\big(B_{2k}\big(\frac{1+x^{-1}}{2}\big)\big)$, there exists a non-vanishing term associated with $x^{-d}$, which further implies that 
	then in $x^{n(n+1)}H_{n}\big(B_{2k}\big(\frac{1+x^{-1}}{2}\big)\big)$, there exists a non-vanishing \emph{negative} power of $x$, that is, $x^{n(n+1)-d}$. However, the Hankel determinants $H_{n}\big(\sum_{j=0}^{k}\binom{k}{j}B_{k+j}\big(\tfrac{1}{2}\big)x^{j}\big)$ are \emph{polynomials} in $x$, and so are $x^{n(n+1)}H_{n}\big(B_{2k}\big(\frac{1+x^{-1}}{2}\big)\big)$, which should \emph{not} contain any negative power of $x$. We are led to a contradiction, and hence, the degree of $H_{n}\big(B_{2k}\big(\frac{1+x}{2}\big)\big)$ is \emph{at most} $n(n+1)$. Finally, we compute the coefficient at $x^{n(n+1)}$ as follows:
	\begin{align*}
		\big[x^{n(n+1)}\big]H_{n}\left(B_{2k}\left(\frac{1+x}{2}\right)\right) & =  \big[x^0\big]x^{n(n+1)}H_{n}\left(B_{2k}\left(\frac{1+x^{-1}}{2}\right)\right)\\
		& = \big[x^0\big] H_{n}\left(\sum_{\ell=0}^{k}\binom{k}{\ell}B_{k+\ell}\big(\tfrac{1}{2}\big)\,x^{\ell}\right)\\
		& = H_{n}\left(\sum_{\ell=0}^{k}\binom{k}{\ell}B_{k+\ell}\big(\tfrac{1}{2}\big)\,x^{\ell}\bigg|_{x=0}\right).
	\end{align*}
	Thus,
	\begin{align*}
		\big[x^{n(n+1)}\big]H_{n}\left(B_{2k}\left(\frac{1+x}{2}\right)\right) = H_{n}\big(B_{k}\big(\tfrac{1}{2}\big)\big) \overset{\text{\eqref{eq:Hankel-B-poly}}}{=} H_{n}(B_{k}).
	\end{align*}
	In particular, this value is non-vanishing in view of \eqref{eq:Bk}, thereby implying that the degree of $H_{n}\big(B_{2k}\big(\frac{1+x}{2}\big)\big)$ is \emph{exactly} $n(n+1)$.
\end{proof}

\section{$q$-Binomial transforms}\label{sec:QBinomial}

%In view of the theory of umbral calculus, we may define an umbra $\mathcal{A}$ with evaluation $\mathcal{A}^{k}=a_{k}$ so as to rewrite the binomial transform \eqref{eq:BinomialTransform} as
%\begin{align*}
%	a_k(x) = \sum_{\ell=0}^{k}\binom{k}{\ell}a_{k-\ell}x^{\ell} = \sum_{\ell=0}^{k}\binom{k}{\ell}\mathcal{A}^{k-\ell}x^{\ell} = \prod_{j=0}^{k-1} (\mathcal{A}+x).
%\end{align*}
%To understand the motivation of our $q$-binomial transform \eqref{eq:alpha-poly-1}, we similarly define an umbra $\boldsymbol{\alpha}$ with evaluation $\boldsymbol{\alpha}^{k}=\alpha_{k}$ for $(\alpha_{k})_{k\ge 0}$ a given sequence. Then in light of the $q$-binomial theorem \eqref{eq:QBinomial},
%\begin{align*}
%	\alpha_{k}(x)=\sum_{\ell=0}^{k}q^{\binom{\ell}{2}}\qbinom{k}{\ell}_{q}\alpha_{k-\ell}x^{\ell} = \sum_{\ell=0}^{k}q^{\binom{\ell}{2}}\qbinom{k}{\ell}_{q}\boldsymbol{\alpha}^{k-\ell}x^{\ell} = \prod_{j=0}^{k-1} (q^j \boldsymbol{\alpha} + x).
%\end{align*}

To understand the underlying motivation of our $q$-binomial transform \eqref{eq:alpha-poly-1}, we recall the theory of umbral calculus and define an umbra $\mathcal{A}$ with evaluation $\mathcal{A}^{k}=a_{k}$ in order to rewrite the binomial transform \eqref{eq:BinomialTransform} as
\begin{align*}
	a_k(x) = \sum_{\ell=0}^{k}\binom{k}{\ell}a_{k-\ell}x^{\ell} = \sum_{\ell=0}^{k}\binom{k}{\ell}\mathcal{A}^{k-\ell}x^{\ell} = \prod_{j=0}^{k-1} (\mathcal{A}+x),
\end{align*}
wherein the last equality follows from the \emph{binomial theorem}, which can be further generalized as the \emph{$q$-binomial theorem} \cite[p.~421, Eq.~(17.2.35)]{And2010}:
\begin{equation*}
	\prod_{k=0}^{n-1}(q^{k}\alpha+x)=\sum_{k=0}^{n}q^{\binom{k}{2}}\qbinom{n}{k}_{q}\alpha^{n-k}x^{k}.
\end{equation*}
Thus, we similarly define an umbra $\boldsymbol{\alpha}$ with evaluation $\boldsymbol{\alpha}^{k}=\alpha_{k}$ for $(\alpha_{k})_{k\ge 0}$. Then the $q$-binomial transform \eqref{eq:alpha-poly-1} is reformulated as
\begin{align*}
	\alpha_{k}(x)=\sum_{\ell=0}^{k}q^{\binom{\ell}{2}}\qbinom{k}{\ell}_{q}\alpha_{k-\ell}x^{\ell} = \sum_{\ell=0}^{k}q^{\binom{\ell}{2}}\qbinom{k}{\ell}_{q}\boldsymbol{\alpha}^{k-\ell}x^{\ell} = \prod_{j=0}^{k-1} (q^j \boldsymbol{\alpha} + x).
\end{align*}
The above discussion explains the occurrence of the prefactor $q^{\binom{\ell}{2}}$ in the summand.

\subsection{Leading coefficient}

In this section, our main objective is to analyze the Hankel determinants $H_n(\alpha_k(x))$ and $H_{n}(\widetilde{\alpha}_{k}(x))$, especially the leading coefficient in them as stated in Theorems \ref{th:q-alpha-1} and \ref{th:LCc2}.

\subsubsection{The Hankel determinants $H_n(\alpha_k(x))$}

It is clear that our $q$-binomial transform \eqref{eq:alpha-poly-1} reduces to the binomial transform \eqref{eq:BinomialTransform} by taking the $q\to 1$ limit. However, unlike the invariance relation \eqref{eq:inv} for the binomial case, there is no obvious pattern connecting $H_n(\alpha_k(x))$ and $H_n(\alpha_k)$. In Table~\ref{tab:CoefficientsH2}, we have computed the polynomial expansion of $H_2(\alpha_k(x))$, which is of degree $6$, and one may see the chaos among these coefficients.
\begin{table}[ht]
	\def\arraystretch{1.5}
	\centering
	\caption{Coefficients in $H_2(\alpha_k(x))$}\label{tab:CoefficientsH2}
	\scalebox{0.83}{%
	\begin{tabular}{c||c}
		\hline 
		$x^{6}$ & $-q^{3}(1-q)^{3}(1+q)\alpha_{0}^{3}$\\
		\hline 
		$x^{5}$ & $-q^{2}(1-q)^{3}(1+q)(1+2q)\alpha_{0}^{2}\alpha_{1}$\\
		\hline 
		$x^{4}$ & $-q(1-q)^{2}(1+q)\big[(1+2q+q^2-q^3)\alpha_{0}\alpha_{1}^{2}-(1+2q^2)\alpha_{0}^2\alpha_{2}\big]$\\
		\hline 
		$x^{3}$ & { $(1-q)^{2}(1+q) \big[(2+q+q^{2}+2q^{3})\alpha_{0}\alpha_{1}\alpha_{2}-(1-q)\alpha_{0}^{2}\alpha_{3}-(1+2q+2q^2+q^3)\alpha_{1}^{3}\big]$}\\
		\hline 
		$x^{2}$ & {\small $(1-q) \big[(1+3q+q^2-q^3)\alpha_0\alpha_1\alpha_3+(1-q-q^2-q^3-q^4)\alpha_0\alpha_2^2-\alpha_0^2\alpha_4-(1+2q-2q^3-q^4)\alpha_1^2\alpha_2\big]$}\\
		\hline 
		$x^{1}$ & $-(1-q)\big[\alpha_0\alpha_1\alpha_4-(1-q^2)\alpha_0\alpha_2\alpha_3+(1+2q)\alpha_1\alpha_2^2-(1+2q+q^2)\alpha_1^2\alpha_3\big]$\\
		\hline 
		$x^{0}$ & $\alpha_{0}\alpha_{2}\alpha_{4}-\alpha_{0}\alpha_{3}^{2}+2\alpha_{1}\alpha_{2}\alpha_{3}-\alpha_{1}^{2}\alpha_{4}-\alpha_{2}^{3}$\\
		\hline 
	\end{tabular}
	}
\end{table}

 In this subsection, we explore the behavior of the leading coefficient in $H_n(\alpha_k(x))$, and prove Theorem \ref{th:q-alpha-1}. To begin with, we need the Hankel determinant evaluation for the sequence $\big(q^{\binom{k}{2}}\big)_{k\ge 0}$.
\begin{lemma}\label{le:HDetQBinomialk2}
	For $n\ge 0$,
	\begin{align}
		H_{n}\big(q^{\binom{k}{2}}\big) =(-1)^{\binom{n+1}{2}}q^{3\binom{n+1}{3}}\prod_{j=1}^{n}(1-q^{j})^{n+1-j}.\label{eq:HDetQBinomialk2}
	\end{align}
\end{lemma}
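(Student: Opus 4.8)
The plan is to bypass orthogonal polynomials entirely and instead factor the Hankel matrix directly. The starting point is the elementary identity $\binom{i+j}{2}=\binom{i}{2}+\binom{j}{2}+ij$, which yields the factorization
\begin{align*}
	q^{\binom{i+j}{2}}=q^{\binom{i}{2}}\cdot q^{\binom{j}{2}}\cdot\big(q^{i}\big)^{j}.
\end{align*}
Consequently, the Hankel matrix $\big(q^{\binom{i+j}{2}}\big)_{0\le i,j\le n}$ equals $DVD$, where $D=\mathrm{diag}\big(q^{\binom{0}{2}},q^{\binom{1}{2}},\ldots,q^{\binom{n}{2}}\big)$ and $V=\big((q^{i})^{j}\big)_{0\le i,j\le n}$ is the Vandermonde matrix on the nodes $q^{0},q^{1},\ldots,q^{n}$. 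Taking determinants gives $H_{n}\big(q^{\binom{k}{2}}\big)=(\det D)^{2}\,\det V$, reducing everything to two classical evaluations.

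For the diagonal factor, the hockey-stick identity $\sum_{i=0}^{n}\binom{i}{2}=\binom{n+1}{3}$ gives $\det D=q^{\binom{n+1}{3}}$, hence $(\det D)^{2}=q^{2\binom{n+1}{3}}$. For the Vandermonde factor, $\det V=\prod_{0\le i<j\le n}(q^{j}-q^{i})$. It then remains to rewrite this product: from $q^{j}-q^{i}=-q^{i}(1-q^{j-i})$ and the fact that there are $\binom{n+1}{2}$ pairs $0\le i<j\le n$, the sign contributes $(-1)^{\binom{n+1}{2}}$; the accumulated power of $q$ is $q^{\sum_{0\le i<j\le n}i}$, where $\sum_{0\le i<j\le n}i=\sum_{i=0}^{n}i(n-i)=\binom{n+1}{3}$; and, grouping the remaining factors by the common difference $d=j-i$ (for which there are exactly $n+1-d$ admissible pairs), one obtains $\prod_{d=1}^{n}(1-q^{d})^{n+1-d}$. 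Multiplying the three contributions together with $(\det D)^{2}$ produces precisely the right-hand side of \eqref{eq:HDetQBinomialk2}.

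Since both pieces are standard and the factorization is exact, there is no real obstacle here; the only thing requiring attention is the exponent bookkeeping, namely verifying the two instances of $\sum(\cdots)=\binom{n+1}{3}$ and counting the pairs with a prescribed difference. (Alternatively, one could produce the three-term recurrence for the monic orthogonal polynomials attached to $\big(q^{\binom{k}{2}}\big)_{k\ge 0}$ and invoke Heilermann's formula \eqref{eq:HankelDet}, but the Vandermonde route is shorter and self-contained.)
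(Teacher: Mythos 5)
Your proof is correct and follows essentially the same route as the paper: the identity $\binom{i+j}{2}=\binom{i}{2}+\binom{j}{2}+ij$ reduces the Hankel determinant to $q^{2\binom{n+1}{3}}$ times the Vandermonde determinant on the nodes $q^{0},\ldots,q^{n}$, which is then rewritten exactly as in the paper (sign $(-1)^{\binom{n+1}{2}}$, power $q^{\binom{n+1}{3}}$ from $\sum_{i}i(n-i)$, and the grouping by common difference giving $\prod_{j=1}^{n}(1-q^{j})^{n+1-j}$). The only cosmetic difference is that you phrase the row/column extraction as the matrix factorization $DVD$; all exponent bookkeeping checks out.
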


\begin{proof}
	Note that $H_{n}\big(q^{\binom{k}{2}}\big)$ is the determinant of the Hankel matrix $\big(q^{\binom{i+j}{2}}\big)_{i,j=0}^n$. For its $(i,j)$-th entry, we may rewrite it as
	\begin{align*}
		q^{\binom{i+j}{2}} = q^{\binom{i}{2}}\cdot q^{\binom{j}{2}}\cdot q^{ij}.
	\end{align*}
	Hence, we first factor out $q^{\binom{i}{2}}$ from the $i$-th row for each $i$, and then $q^{\binom{j}{2}}$ from the $j$-th column for each $j$. Then
	\begin{align*}
		H_{n}\big(q^{\binom{k}{2}}\big)  = q^{\sum_{i=0}^n \binom{i}{2}}\cdot q^{\sum_{j=0}^n \binom{j}{2}}\cdot \underset{{0\le i,j\le n}}{\det}(q^{ij})= q^{2\binom{n+1}{3}}\cdot \prod_{0\le i< j\le n} (q^j-q^i),
	\end{align*}
	where we have applied the evaluation for the Vandermonde determinant \cite[p.~366, Theorem 9.67]{Axl2023} in the last equality. Finally, we note that
	\begin{align*}
		\prod_{0\le i< j\le n} (q^j-q^i) &= (-1)^{\binom{n+1}{2}} \prod_{0\le i< j\le n} (q^i-q^j)\\
		& = (-1)^{\binom{n+1}{2}} \left(\prod_{i=0}^n (q^i)^{n-i}\right) \left(\prod_{0\le i<j\le n} (1-q^{j-i})\right)\\
		(\text{with $\ell := j-i$}) & =  (-1)^{\binom{n+1}{2}} q^{\binom{n+1}{3}} \prod_{\ell=1}^n (1-q^\ell)^{n+1-\ell},
	\end{align*}
	thereby confirming the required identity.
\end{proof}

We are in a position to prove Theorem \ref{th:q-alpha-1}.

\begin{proof}[Proof of Theorem \ref{th:q-alpha-1}]
	We reflect the polynomials $\alpha_k(x)$ by defining for each $k\ge 0$,
	\begin{align}\label{eq:alpha-reflection}
		\alpha'_k(x) := x^k\alpha_k(x^{-1}) = \sum_{\ell=0}^{k}q^{\binom{\ell}{2}}\qbinom{k}{\ell}_{q}\alpha_{k-\ell}x^{k-\ell}.
	\end{align}
	In particular, by \eqref{eq:xkDet},
	\begin{align}\label{eq:H-alpha-beta}
		H_n(\alpha'_k(x)) = x^{n(n+1)} H_n(\alpha_k(x^{-1})).
	\end{align}
	Note that $\alpha_k(x)$ and $\alpha'_k(x)$ are \emph{polynomials} in $x$, and so are $H_n(\alpha_k(x))$ and $H_n(\alpha'_k(x))$. Hence, the degree of $H_n(\alpha_k(x))$ is \emph{at most} $n(n+1)$; otherwise, there exists a \emph{negative} power of $x$ in $x^{n(n+1)} H_n(\alpha_k(x^{-1})) = H_n(\alpha'_k(x))$, which is invalid. Finally, \eqref{eq:H-alpha-beta} implies that $\big[x^{n(n+1)}\big]H_{n}(\alpha_{k}(x))$ equals
	\begin{align*}
		%\big[x^{n(n+1)}\big]H_{n}(\alpha_{k}(x)) =
		\big[x^{0}\big]H_{n}(\alpha'_{k}(x)) = H_{n}(\alpha'_{k}(0)) = H_{n}\big(\alpha_0q^{\binom{k}{2}}\big) \overset{\text{\eqref{eq:xDet}}}{=} \alpha_0^{n+1} H_{n}\big(q^{\binom{k}{2}}\big).
	\end{align*}
	Invoking \eqref{eq:HDetQBinomialk2} gives \eqref{eq:LCc1}. Also, this value is non-vanishing, and hence, the degree of $H_n(\alpha_k(x))$ is \emph{exactly} $n(n+1)$.
\end{proof}

\subsubsection{The Hankel determinants $H_{n}(\widetilde{\alpha}_{k}(x))$}

Now, we move on to the second $q$-binomial transform \eqref{eq:alpha-poly-2}. It can be similarly seen from Table~\ref{tab:Coefficient-alpha-2} that the polynomial expansions of the Hankel determinants $H_{n}(\widetilde{\alpha}_{k}(x))$ behave chaotically.

\begin{table}[ht]
	\def\arraystretch{1.5}
	\centering
	\caption{Coefficients in $H_2(\widetilde{\alpha}_k(x))$}\label{tab:Coefficient-alpha-2}
	\scalebox{0.9}{%
		\begin{tabular}{c||c}
			\hline 
			$x^{3}$ & $-q^{2}(1-q)^{3}(1+q)\alpha_{0}\alpha_{1}\alpha_{2}$\\
			\hline 
			$x^{2}$ & $q^{2}(1-q)^{2}(1+q)\big[(q+q^2)\alpha_{0}\alpha_{1}\alpha_{3}-\alpha_{0}\alpha_{2}^2-\alpha_{1}^2\alpha_{2}\big]$\\
			\hline 
			$x^{1}$ & $-q^2(1-q)\big[q^4\alpha_{0}\alpha_{1}\alpha_{4}-(q^2-q^4)\alpha_{0}\alpha_{2}\alpha_{3}+(1+2q)\alpha_{1}\alpha_{2}^2-(q+2q^2+q^3)\alpha_{1}^2\alpha_{3}\big]$\\
			\hline 
			$x^{0}$ & $q^3 \big[q^4\alpha_{0}\alpha_{2}\alpha_{4}-q^3\alpha_{0}\alpha_{3}^2+2q\alpha_{1}\alpha_{2}\alpha_{3}-q^3\alpha_{1}^2\alpha_{4}-\alpha_{2}^3\big]$\\
			\hline 
		\end{tabular}
	}
\end{table}

As in \eqref{eq:alpha-reflection}, to prove Theorem \ref{th:LCc2}, we also need to reflect the polynomials $\widetilde{\alpha}_{k}(x)$ for each $k\ge 0$ as
\begin{align}\label{eq:alpha-2-reflection}
	\widetilde{\alpha}'_k(x) := x^k\widetilde{\alpha}_k(x^{-1}) = \sum_{\ell=0}^{k}q^{\binom{\ell}{2}}\qbinom{k}{\ell}_{q}\alpha_{\ell}x^{\ell}.
\end{align}
In light of \eqref{eq:xkDet},
\begin{align}\label{eq:H-alpha-2-beta}
	H_n(\widetilde{\alpha}'_k(x)) = x^{n(n+1)} H_n(\widetilde{\alpha}_k(x^{-1})).
\end{align}
Now we require some extra effort to proceed with our proof. To begin with, we define a family of modified difference operators $\Delta_i$ for arbitrary sequences $(s_k)_{k\ge 0}$ as follows:
\begin{align*}
	\Delta_i(s_k):=\begin{cases}
		s_k, & \text{if $i=0$},\\[6pt]
		s_k-q^{i-1}s_{k-1}, & \text{if $i\ge 1$}.
	\end{cases}
\end{align*}

\begin{lemma}
	For every $m\ge 0$,
	\begin{align}\label{eq-Claim}
		\Delta_m\Delta_{m-1}\cdots\Delta_0(\widetilde{\alpha}'_k(x))=\sum_{\ell=m}^k q^{(k-\ell)m+\binom{\ell}{2}}\qbinom{k-m}{\ell-m}_q \alpha_\ell x^\ell.
	\end{align}
\end{lemma}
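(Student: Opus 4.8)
The plan is to prove \eqref{eq-Claim} by induction on $m$, where all operators are understood to act on the index $k$ only, treating $x$ and the entries $\alpha_\ell$ as constants. For the base case $m=0$, the operator $\Delta_0$ is the identity, so $\Delta_0(\widetilde{\alpha}'_k(x))=\widetilde{\alpha}'_k(x)=\sum_{\ell=0}^k q^{\binom{\ell}{2}}\qbinom{k}{\ell}_q\alpha_\ell x^\ell$, which is exactly the right-hand side of \eqref{eq-Claim} with $m=0$ since $(k-\ell)\cdot 0=0$ and $\qbinom{k-0}{\ell-0}_q=\qbinom{k}{\ell}_q$.

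For the inductive step, assume \eqref{eq-Claim} holds with $m-1$ in place of $m$, where $m\ge 1$, and write its right-hand side as $S_k$. Since $m\ge 1$, the definition of $\Delta_m$ gives $\Delta_m\Delta_{m-1}\cdots\Delta_0(\widetilde{\alpha}'_k(x))=\Delta_m(S_k)=S_k-q^{m-1}S_{k-1}$. I would then expand $S_k$ and $q^{m-1}S_{k-1}$ as sums over $\ell$ and observe that, after the shift $k\mapsto k-1$, the exponent of $q$ in the $\ell$-th term of $q^{m-1}S_{k-1}$ is $(k-1-\ell)(m-1)+(m-1)+\binom{\ell}{2}=(k-\ell)(m-1)+\binom{\ell}{2}$, the same as in $S_k$. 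Hence the two sums combine termwise: the $\ell=m-1$ contributions cancel (both $q$-binomial coefficients equal $1$), the $\ell=k$ contribution of $S_k$ survives and equals $q^{\binom{k}{2}}\alpha_k x^k$, and for $m\le\ell\le k-1$ the coefficient of $q^{(k-\ell)(m-1)+\binom{\ell}{2}}\alpha_\ell x^\ell$ is $\qbinom{k-m+1}{\ell-m+1}_q-\qbinom{k-m}{\ell-m+1}_q$.

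The key computational input is the $q$-Pascal identity in the form $\qbinom{n}{j}_q=\qbinom{n-1}{j}_q+q^{n-j}\qbinom{n-1}{j-1}_q$; applied with $n=k-m+1$ and $j=\ell-m+1$ it yields $\qbinom{k-m+1}{\ell-m+1}_q-\qbinom{k-m}{\ell-m+1}_q=q^{k-\ell}\qbinom{k-m}{\ell-m}_q$. Substituting this back, the $\ell$-th coefficient becomes $q^{(k-\ell)(m-1)+(k-\ell)+\binom{\ell}{2}}\qbinom{k-m}{\ell-m}_q=q^{(k-\ell)m+\binom{\ell}{2}}\qbinom{k-m}{\ell-m}_q$, and since the surviving $\ell=k$ boundary term $q^{\binom{k}{2}}\alpha_k x^k$ is precisely the $\ell=k$ instance of this formula, we recover exactly the right-hand side of \eqref{eq-Claim}, closing the induction.

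There is no serious obstacle here: the argument is a routine induction. The only points requiring care are the bookkeeping of the $q$-powers under the index shift $k\mapsto k-1$, the verification that both boundary terms ($\ell=m-1$ and $\ell=k$) behave as claimed, and selecting the correct one of the two $q$-Pascal rules — namely the version with the $q^{n-j}$ weight on the lower-index term rather than the one with a $q^j$ weight on the upper-index term.
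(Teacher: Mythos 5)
Your proposal is correct and follows essentially the same route as the paper: induction on $m$, combining the two sums after noting that the $q$-exponents match under the shift $k\mapsto k-1$, and then applying the $q$-Pascal relation $\qbinom{N}{M}_q-\qbinom{N-1}{M}_q=q^{N-M}\qbinom{N-1}{M-1}_q$ (your version is the same identity with shifted indices). The bookkeeping of the boundary terms at $\ell=m-1$ and $\ell=k$ is handled exactly as in the paper's argument.
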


\begin{proof}
	We prove this lemma by induction on $m$. For $m=0$, the relation \eqref{eq-Claim} simply reduces to \eqref{eq:alpha-2-reflection}. Suppose that \eqref{eq-Claim} holds for a certain integer $m\ge 0$. Then
	\begin{align*}
		&\Delta_{m+1}\big(\Delta_m\cdots \Delta_1(\widetilde{\alpha}'_k(x))  \big)\\
		&\qquad=q^{\binom{k}{2}}\alpha_k x^k+\sum_{\ell=m}^{k-1} q^{(k-\ell)m+\binom{\ell}{2}}\qbinom{k-m}{\ell-m}_q \alpha_\ell x^\ell \\
		&\qquad\quad -q^m\sum_{\ell=m}^{k-1} q^{(k-1-\ell)m+\binom{\ell}{2}}\qbinom{k-1-m}{\ell-m}_q \alpha_\ell x^\ell \\
		&\qquad=q^{\binom{k}{2}}\alpha_k x^k+\sum_{\ell=m}^{k-1} q^{(k-\ell)m+\binom{\ell}{2}}\left(\qbinom{k-m}{\ell-m}_q-\qbinom{k-1-m}{\ell-m}_q\right) \alpha_\ell x^\ell \\
		&\qquad=q^{\binom{k}{2}}\alpha_k x^k+\sum_{\ell=m+1}^{k-1} q^{(k-\ell)(m+1)+\binom{\ell}{2}}\qbinom{k-1-m}{\ell-1-m}_q \alpha_\ell x^\ell,
	\end{align*}
	where we have applied the following Pascal-type relation \cite[p.~35, Eq.~(3.3.3)]{And1998} for the last equality:
	\begin{align*}
		\qbinom{N}{M}_q-\qbinom{N-1}{M}_q = q^{N-M}\qbinom{N-1}{M-1}_q.
	\end{align*}
	Therefore, we have shown the $m+1$ case, and hence the relation \eqref{eq-Claim} in general.
\end{proof}

We are ready to prove Theorem \ref{th:LCc2}.

\begin{proof}[Proof of Theorem \ref{th:LCc2}]
	We first show that the degree of $H_{n}(\widetilde{\alpha}_{k}(x))$ is \emph{at most} $\frac{n(n+1)}{2}$. Note that in view of \eqref{eq:H-alpha-2-beta}, it is sufficient to prove that the degree of $H_{n}(\widetilde{\alpha}'_{k}(x))$ is \emph{at least} $\frac{n(n+1)}{2}$. To see this, we apply $n$ steps of row transformations to the determinant. Specifically speaking, for each $k=1,\ldots,n$, in the $k$-th step we apply the row transformation $r_i-q^{k-1}r_{i-1}$ to the $i$-th row for each $i=n,n-1,\dots,k$. Here, we should recall that our Hankel matrix is indexed from the \emph{zeroth} row. Now, the determinant becomes
	\begin{align*}
		H_{n}(\widetilde{\alpha}'_{k}(x))=\underset{{0\le i,j\le n}}{\det}\big(\Delta_i\Delta_{i-1}\cdots \Delta_{0}(\widetilde{\alpha}'_{i+j}(x))\big),
	\end{align*}
	whose $(i,j)$-entry can be further evaluated by \eqref{eq-Claim},
	\begin{align*}
		\Delta_i\Delta_{i-1}\cdots \Delta_{0}(\widetilde{\alpha}'_{i+j}(x)) & =\sum_{\ell=i}^{i+j} q^{(i+j-\ell)i+\binom{\ell}{2}}\qbinom{j}{\ell-i}_q \alpha_\ell x^\ell \\
		&=q^{ij+\binom{i}{2}}\alpha_i x^i+x^{i+1}R_{i,j}(x),
	\end{align*}
	where each $R_{i,j}(x)$ is a \emph{polynomial} in $x$ whose explicit form is not needed in our arguments. It turns out that entries in the $i$-th row have a common factor $x^{i}$. Extracting the factor $x^{i}$ from the $i$-th row for each $i=0,1,\ldots,n$, we deduce that
	\begin{align}\label{eq:alpha-2-det-new}
		H_{n}(\widetilde{\alpha}'_{k}(x))=x^{\frac{n(n+1)}{2}}\underset{{0\le i,j\le n}}{\det}\big(q^{ij+\binom{i}{2}}\alpha_i +xR_{i,j}(x)\big),
	\end{align}
	which implies that $\deg H_{n}(\widetilde{\alpha}'_{k}(x))\ge \frac{n(n+1)}{2}$, as requested.
 
 Finally, to show that the degree of $H_{n}(\widetilde{\alpha}_{k}(x))$ is \emph{exactly} $\frac{n(n+1)}{2}$, we only need to confirm that the corresponding coefficient is non-vanishing, as given in \eqref{eq:LCc2}:
	\begin{align*}
		\big[x^{\frac{n(n+1)}{2}}\big]H_{n}(\widetilde{\alpha}_{k}(x)) \overset{\text{\eqref{eq:H-alpha-2-beta}}}{=} \big[x^{\frac{n(n+1)}{2}}\big]H_{n}(\widetilde{\alpha}'_{k}(x)) \overset{\text{\eqref{eq:alpha-2-det-new}}}{=} \underset{{0\le i,j\le n}}{\det}\big(q^{ij+\binom{i}{2}}\alpha_i\big),
	\end{align*}
	which equals
	\begin{align*}
		\alpha_0\alpha_1\cdots \alpha_n q^{\binom{n+1}{3}} \underset{{0\le i,j\le n}}{\det}\big(q^{ij}\big) = \alpha_0\alpha_1\cdots \alpha_n (-1)^{\binom{n+1}{2}}q^{2\binom{n+1}{3}}\prod_{j=1}^{n}(1-q^{j})^{n+1-j},
	\end{align*}
	wherein the Vandermonde determinant has been evaluated in the proof of Lemma \ref{le:HDetQBinomialk2}.
\end{proof}

\subsection{An example admitting explicit Hankel determinant expressions}

For $q$-binomial transforms \eqref{eq:alpha-poly-1} and \eqref{eq:alpha-poly-2}, it is in general out of reach to establish closed expressions for the related Hankel determinants. However, some specific sequences $(\alpha_{k})_{k\ge 0}$ still admit a neat Hankel determinant evaluation. In this section, we provide such an instance.

\begin{theorem}
\label{prop:qBinomial1} 
Choose the sequence $\alpha_{k}^{u,v}:=q^{-\binom{k}{2}}(u;q)_{k}\,v^{k}$ with $u$ and $v$ indeterminates in \eqref{eq:alpha-poly-2}, and let
\begin{align*}
	\widetilde{\alpha}_{k}^{u,v}(x):=\sum_{\ell=0}^{k}\qbinom{k}{\ell}_{q}(u;q)_{\ell}\,v^{\ell}x^{k-\ell}=\sum_{\ell=0}^{k}\qbinom{k}{\ell}_{q}(u;q)_{k-\ell}\,v^{k-\ell}x^{\ell}.
\end{align*}
Then
\begin{equation}
	H_{n}(\widetilde{\alpha}_{k}^{u,v}(x))=v^{\binom{n+1}{2}}q^{\binom{n+1}{3}}\prod_{j=1}^{n}(uv q^{j-1}-x)^{n+1-j}(u,q;q)_{n+1-j}.\label{eq:FinalExample}
\end{equation}
\end{theorem}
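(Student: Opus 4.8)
The plan is to realize the sequence $(\widetilde{\alpha}_{k}^{u,v}(x))_{k\ge 0}$, up to an overall geometric rescaling that \eqref{eq:xkDet} absorbs, as the moment sequence of a one‑parameter degeneration of the big $q$‑Jacobi polynomials, and then to extract the Hankel determinant from Heilermann's formula \eqref{eq:HankelDet} together with the recurrence of Lemma \ref{le:Jacobi-rec}. First I would reformulate $\widetilde{\alpha}_{k}^{u,v}(x)$: interchanging the summations in \eqref{eq:alpha-poly-2} and using $\sum_{m\ge 0}\qbinom{m+\ell}{\ell}_{q}y^{m}=1/(y;q)_{\ell+1}$ yields
\begin{align*}
	\sum_{k\ge 0}\widetilde{\alpha}_{k}^{u,v}(x)\,z^{k}=\frac{1}{1-xz}\,{}_{2}\phi_{1}\!\left(\begin{matrix}u,\ q\\ xzq\end{matrix};q,vz\right),
\end{align*}
which exhibits $(\widetilde{\alpha}_{k}^{u,v}(x))_{k}$ as a genuine moment sequence; equivalently, the $q$‑binomial theorem gives the umbral product $\widetilde{\alpha}_{k}^{u,v}(x)=\prod_{i=0}^{k-1}(\boldsymbol{\alpha}+xq^{i})$ with $\boldsymbol{\alpha}^{\ell}=\alpha_{\ell}^{u,v}$.

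Next I would pin down the parameters. The right‑hand side of \eqref{eq:FinalExample} is, up to $x$‑free factors, a product of the linear forms $uvq^{j-1}-x$, so by Heilermann it must come from a three‑term coefficient $v_{j}=vq^{j-1}(uvq^{j-1}-x)(1-uq^{j-1})(1-q^{j})$, a polynomial of degree $4$ in $q^{j}$. Comparing this with $v_{j}=A_{j-1}C_{j}$ read off from Lemma \ref{le:Jacobi-rec} forces the degeneration $b=0$ — so $A_{j-1}=(1-aq^{j})(1-cq^{j})$ and $C_{j}=-acq^{j+1}(1-q^{j})$ — together with $a=u/q$, $c=uv/(qx)$ and a scaling $\rho=x/u$. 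The substantive content is then the single terminating basic‑hypergeometric identity
\begin{align*}
	\widetilde{\alpha}_{k}^{u,v}(x)=\Bigl(\frac{x}{u}\Bigr)^{k}\mu_{k}\Bigl(\frac{u}{q},\,0,\,\frac{uv}{qx}\Bigr),\qquad k\ge 0,
\end{align*}
with $\mu_{k}$ the big $q$‑Jacobi moment sequence of Remark \ref{rmk:mu-Q}; I would establish it by writing $\mu_{k}(a,0,c)$ out explicitly — from the known big $q$‑Laguerre moments, or via a $q$‑Chu–Vandermonde / series‑reversal argument, taking care that the Al‑Salam–Chihara dictionary of Lemma \ref{thm:MomentsBigQISAl-Salam-Chihara} is invoked only in a legitimate regime — and matching coefficients, the cases $k=0,1$ already fixing $\rho$. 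I expect this step to be the main obstacle: both the correct reading‑off of $a,b,c,\rho$ and, above all, a rigorous proof of the moment identity in the degenerate $b=0$ situation, where the generic big $q$‑Jacobi correspondence has to be handled with care.

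Granting the identification, \eqref{eq:xkDet} contributes the factor $\rho^{n(n+1)}=(x/u)^{n(n+1)}$ and Heilermann's formula \eqref{eq:HankelDet} (with $\mu_{0}=1$) gives, after collecting the $x$‑ and $u$‑constants,
\begin{align*}
	H_{n}\bigl(\widetilde{\alpha}_{k}^{u,v}(x)\bigr)=\prod_{j=1}^{n}\Bigl(vq^{j-1}(uvq^{j-1}-x)(1-uq^{j-1})(1-q^{j})\Bigr)^{n+1-j},
\end{align*}
whereupon the elementary identities $\sum_{j=1}^{n}(n+1-j)=\binom{n+1}{2}$, $\sum_{j=1}^{n}(j-1)(n+1-j)=\binom{n+1}{3}$, and $\prod_{j=1}^{n}\bigl[(1-uq^{j-1})(1-q^{j})\bigr]^{n+1-j}=\prod_{i=1}^{n}(u,q;q)_{i}=\prod_{j=1}^{n}(u,q;q)_{n+1-j}$ recombine the product into exactly \eqref{eq:FinalExample}. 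As a cross‑check and a possible alternative route, Theorem \ref{th:LCc2} already gives that $H_{n}(\widetilde{\alpha}_{k}^{u,v}(x))$ has degree $\binom{n+1}{2}$ in $x$ with leading coefficient $(-1)^{\binom{n+1}{2}}v^{\binom{n+1}{2}}q^{\binom{n+1}{3}}\prod_{i=1}^{n}(u,q;q)_{i}$, using $\alpha_{0}^{u,v}\cdots\alpha_{n}^{u,v}=q^{-\binom{n+1}{3}}v^{\binom{n+1}{2}}\prod_{m=0}^{n-1}(1-uq^{m})^{n-m}$, in agreement with \eqref{eq:FinalExample}; so it would in fact suffice to show that for each $j=1,\dots,n$ the Hankel matrix $\bigl(\widetilde{\alpha}_{i+i'}^{u,v}(uvq^{j-1})\bigr)_{0\le i,i'\le n}$ has rank at most $j$, which would force the root $x=uvq^{j-1}$ with multiplicity $\ge n+1-j$ and, together with the degree and leading coefficient, determine the polynomial completely.
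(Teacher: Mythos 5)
Your proposal follows essentially the same route as the paper: both realize $\widetilde{\alpha}_{k}^{u,v}(x)=(x/u)^{k}\mu_{k}(a,0,c)$ as rescaled big $q$-Jacobi moments (your parameters $(u/q,0,uv/(qx))$ agree with the paper's $(uv/(xq),0,u/q)$ after the $a\leftrightarrow c$ swap, which is harmless when $b=0$) and then apply Heilermann's formula \eqref{eq:HankelDet} with the recurrence coefficients of Lemma \ref{le:Jacobi-rec}. The moment identity you flag as the main obstacle is settled in the paper precisely along one of the routes you name: the generating function $\sum_{k\ge 0}\widetilde{\alpha}_{k}^{u,v}(x)w^{k}/(q;q)_{k}=(uvw;q)_{\infty}/(vw,xw;q)_{\infty}$, obtained from the Al-Salam--Carlitz $V_{k}^{(a)}$ generating function, is matched against the Al-Salam--Chihara generating function with $t_{2}=0$ via Lemma \ref{thm:MomentsBigQISAl-Salam-Chihara}.
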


\begin{proof}%[Proof of Theorem \ref{prop:qBinomial1}]
	Recall that the \emph{second family of Al-Salam--Carlitz polynomials} \cite[p.~53, Eq.~(4.8)]{AlSalamCarlitz2}:
	\begin{align*}
		V_{k}^{(a)}(x;q):=(-1)^{k}q^{-\binom{k}{2}}\sum_{\ell=0}^{k}\qbinom{k}{\ell}_{q}a^{\ell}(x;q)_{k-\ell}
	\end{align*}
	admits the following generating function identity \cite[p.~531, Eq.~(3.22)]{IsmailStanton}:
	\begin{align*}
		\sum_{k=0}^{\infty}V_{k}^{(a)}(x;q)\frac{q^{\binom{k}{2}}}{(q;q)_{k}}(-t)^{k}=\frac{(xt;q)_{\infty}}{(t,at;q)_{\infty}}.
	\end{align*}
	Now letting $t\mapsto vw$, $a\mapsto x/v$, and $x\mapsto u$, we have 
	\begin{align*}
		\frac{(uvw;q)_{\infty}}{(vw,xw;q)_{\infty}}=\sum_{k=0}^{\infty}V_{k}^{(x/v)}(u;q)\frac{q^{\binom{k}{2}}}{(q;q)_{k}}(-vw)^{k},
	\end{align*}
	wherein it is further notable that
	\begin{align*}
		V_{k}^{(x/v)}(u;q)q^{\binom{k}{2}}(-v)^{k}=\sum_{\ell=0}^{k}\qbinom{k}{\ell}_{q}\frac{x^{\ell}}{v^{\ell}}(u;q)_{k-\ell}v^{k}=\widetilde{\alpha}_{k}^{u,v}(x).
	\end{align*}
	Thus, we have the generating function identity
	\begin{align}\label{ex-alpha-gf}
		\sum_{k=0}^{\infty}\frac{\widetilde{\alpha}_{k}^{u,v}(x)}{(q;q)_{k}}w^{k}=\frac{(uvw;q)_{\infty}}{(vw,xw;q)_{\infty}}.
	\end{align}
	
	In the meantime, note that the generating function for the Al-Salam--Chiahara polynomials \eqref{eq:Al-Salam-Chihara-Polynomial} can be found in \cite[p.~531, Eq.~(3.18)]{IsmailStanton}:
	\begin{align*}
		\sum_{k=0}^{\infty}\frac{(t_{1}t_{2};q)_{k}\,p_{k}(\theta;t_{1},t_{2})}{(q;q)_{k}t_{1}^{k}}t^{k}=\frac{(t_{1}t,t_{2}t;q)_{\infty}}{(te^{i\theta},te^{-i\theta};q)_{\infty}}.
	\end{align*}
	In light of Lemma \ref{thm:MomentsBigQISAl-Salam-Chihara}, we know that $p_{k}(\theta,t_{1},t_{2})=\mu_{k}(a,b,c)$ by taking $a= t_{1}e^{i\theta}/q$, $b= t_{2}e^{-i\theta}/q$, and $c= t_{1}e^{-i\theta}/q$. We further let $e^{i\theta}\mapsto \sqrt{v/x}$, $t_{1}\mapsto u\sqrt{v/x}$, and $t_{2}\mapsto 0$, which leads to 
	\begin{align}\label{eq:abc-value}
		(a,b,c)=\left(\frac{uv}{xq},0,\frac{u}{q}\right).
	\end{align}
	For this choice of $a$, $b$, and $c$,
	\begin{align*}
		\sum_{k=0}^{\infty}\frac{\mu_{k}(a,b,c)}{(q;q)_{k}}\left(\frac{t}{u\sqrt{v/x}}\right)^{k}=\frac{(ut\sqrt{v/x};q)_{\infty}}{(t\sqrt{v/x},t\sqrt{x/v};q)_{\infty}}.
	\end{align*}
	Now, making the substitution $w=t/\sqrt{vx}$ in the above yields
	\begin{align*}
		\sum_{k=0}^{\infty}\frac{\mu_{k}(a,b,c)}{(q;q)_{k}}\left(\frac{wx}{u}\right)^{k}=\frac{(uvw;q)_{\infty}}{(vw,xw;q)_{\infty}}\overset{\text{\eqref{ex-alpha-gf}}}{=} \sum_{k=0}^{\infty}\frac{\widetilde{\alpha}_{k}^{u,v}(x)}{(q;q)_{k}}w^{k}.
	\end{align*}
	This shows that for every $k\ge 0$,
	\begin{align}\label{eq:alpha-mu}
		\widetilde{\alpha}_{k}^{u,v}(x)=\left(\frac{x}{u}\right)^k \mu_{k}(a,b,c).
	\end{align}
	
	Now it remains to evaluate the Hankel determinants $H_{n}(\mu_{k}(a,b,c))$. In view of Remark \ref{rmk:mu-Q}, we combine \eqref{eq:3Term}, \eqref{eq:HankelDet}, and \eqref{eq:3TermQn} to get
	\begin{align*}
		H_{n}(\mu_{k}(a,b,c))=\prod_{j=1}^{n}(A_{j-1}C_{j})^{n+1-j},
	\end{align*}
	wherein the substitution of variables in \eqref{eq:abc-value} should be executed on the sequences $A_j$ and $C_j$ in Lemma \ref{le:Jacobi-rec}:
	\begin{align*}
		A_{j}= & \frac{(1-aq^{j+1})(1-cq^{j+1})(1-abq^{j+1})}{(1-abq^{2j+1})(1-abq^{2j+2})}=\left(1-\frac{uv q^{j}}{x}\right)(1-u q^{j}),\\
		C_{j}= & -\frac{(1-q^{j})(1-bq^{j})(1-abc^{-1}q^{j})}{(1-abq^{2j})(1-abq^{2j+1})}acq^{j+1}=-\frac{u^{2}vq^{j-1}(1-q^{j})}{x}.
	\end{align*}
	Therefore,
	\begin{align*}
		H_{n}(\mu_{k}(a,b,c)) & =\prod_{j=1}^{n}\left(\frac{u^{2}v}{x}\cdot q^{j-1}\cdot \left(\frac{uv q^{j-1}}{x}-1\right)\cdot (1-q^{j})(1-u q^{j-1})\right)^{n+1-j}\\
		%& =\left(\frac{u^{2}v}{x^{2}}\right)^{\sum_{\ell=1}^{n}(n+1-\ell)}q^{\sum_{\ell=1}^{n}\ell(n+1-\ell)}\prod_{\ell=1}^{n}((uv q^{\ell-1}-x)(1-q^{\ell})(1-u q^{\ell-1}))^{n+1-\ell}\\
		& =\left(\frac{u^{2}v}{x^{2}}\right)^{\binom{n+1}{2}}q^{\binom{n+1}{3}}\prod_{j=1}^{n}(uv q^{j-1}-x)^{n+1-j}(u,q;q)_{n+1-j}.
	\end{align*}
	Finally, we apply \eqref{eq:xkDet} to \eqref{eq:alpha-mu}, and derive that
	\begin{align*}
		H_{n}(\widetilde{\alpha}_{k}^{u,v}(x)) = \left(\frac{x}{u}\right)^{n(n+1)} H_{n}(\mu_{k}(a,b,c)),
	\end{align*}
	which yields \eqref{eq:FinalExample} by recalling the above evaluation of $H_{n}(\mu_{k}(a,b,c))$.
\end{proof}

\begin{remark}
	From \cite[p.~21, Eq.~(1.13)]{ITZ}, we know that for the \emph{$q$-generalized Catalan numbers}: 
	\begin{align*}
		\lambda_{k}:=\frac{(u q;q)_{k}}{(uv q^{2};q)_{k}},
	\end{align*}
	their Hankel determinants are given by 
	\begin{align*}
		H_{n}(\lambda_{k})=u^{\binom{n+1}{2}}q^{\frac{n(n+1)(2n+1)}{6}}\prod_{j=1}^{n+1}\frac{(q,u q,v q;q)_{n+1-j}}{(uv q^{n+2-j};q)_{n+1-j}(uv q^{2};q)_{2(n+1-j)}}.
	\end{align*}
	Letting $u\mapsto u/q$ and $v\mapsto 0$ gives 
	\begin{align*}
		H_{n}((u;q)_{k})=u^{\binom{n+1}{2}}q^{2\binom{n+1}{3}}\prod_{j=1}^{n+1}(q,u;q)_{n+1-j}.
	\end{align*}
	Therefore, by \eqref{eq:xkDet}, 
	\begin{align*}
		H_{n}\big(q^{\binom{k}{2}}\alpha_{k}^{u,v}\big)=H_{n}((u;q)_{k}v^{k})=u^{\binom{n+1}{2}}v^{n(n+1)}q^{2\binom{n+1}{3}}\prod_{\ell=1}^{n+1}(u,q;q)_{n+1-\ell}.
	\end{align*}
	Hence, \eqref{eq:FinalExample} indicates that 
	\begin{align*}
		\frac{H_{n}(\widetilde{\alpha}_{k}^{u,v}(x))}{H_{n}\big(q^{\binom{k}{2}}\alpha_{k}^{u,v}\big)}=\frac{\prod_{j=0}^{n}(uv q^{j}-x)^{n-j}}{(uv)^{\binom{n+1}{2}}q^{\binom{n+1}{3}}}.
	\end{align*}
\end{remark}

%\section{Discussions}\label{sec:FurtherDiscussion}

\subsection*{Acknowledgements}

This project was sponsored by the Kunshan Municipal Government research funding.
%and the seed-fund support program at the WHU-DKU joint research platform under Grant XXWHUDKUZZJJ202309.


\begin{thebibliography}{99}

\bibitem{AlSalamCarlitz1}
W. A. Al-Salam and L. Carlitz, Some determinants of Bernoulli, Euler and related numbers, \textit{Portugal. Math.} \textbf{18} (1959), 91--99.

\bibitem{AlSalamCarlitz2}
W. A. Al-Salam and L. Carlitz, Some orthogonal $q$-polynomials, \textit{Math. Nachr.} \textbf{30} (1965), 47--61.

\bibitem{And1998}
G. E. Andrews, \textit{The Theory of Partitions}, Cambridge University Press, Cambridge, 1998.

\bibitem{And2010}
G. E. Andrews, $q$-Hypergeometric and related functions, in: \textit{NIST handbook of mathematical functions}, 419--433, U.S. Dept. Commerce, Washington, DC, 2010.

\bibitem{AndrewsAskey}
G. E. Andrews and R. Askey, Classical orthogonal polynomials, in: \textit{Orthogonal polynomials and applications (Bar-le-Duc, 1984)}, 36--62, Lecture Notes in Math., \textbf{1171}, Springer, Berlin, 1985.

%\bibitem{AskeyIsmail}
%R. A. Askey and M. E. H. Ismail, Recurrence relations, continued fractions, and orthogonal polynomials, \textit{Mem. Amer. Math. Soc.} \textbf{49} (1984), no. 300, iv+108 pp.

\bibitem{Axl2023}
S. Axler, \textit{Linear Algebra Done Right. Fourth Edition}, Springer, Cham, 2023.

\bibitem{Carlitz}
L. Carlitz, $q$-Bernoulli numbers and polynomials, \textit{Duke Math. J.} \textbf{15} (1948), 987--1000.

\bibitem{ChapotonZeng}
F. Chapoton and J. Zeng, Nombres de $q$-Bernoulli--Carlitz et fractions continues, \textit{J. Th\'eor. Nombres Bordeaux} \textbf{29} (2017), no. 2, 347--368.

\bibitem{Chen}
K.-W. Chen, A summation on Bernoulli numbers, \textit{J. Number Theory} \textbf{111} (2005), no. 2, 372--391.

\bibitem{ChernJiu}
S. Chern and L. Jiu, Hankel determinants and
Jacobi continued fractions for $q$-Euler numbers, \textit{C. R. Math. Acad. Sci. Paris}, accepted.

%\bibitem{Chihara}
%T. S. Chihara, \textit{An introduction to orthogonal polynomials}, Gordon and Breach Science Publishers, New York-London-Paris, 1978.

\bibitem{DilcherJiu}
K. Dilcher and L. Jiu, Orthogonal polynomials and Hankel determinants for certain Bernoulli and Euler polynomials, \textit{J. Math. Anal. Appl.} \textbf{497} (2021), no. 1, Paper No. 124855, 18 pp.

\bibitem{Dixit}
A. Dixit, V. H. Moll, and C. Vignat, The Zagier modification of Bernoulli numbers and a polynomial extension. Part I, \textit{Ramanujan J.} \textbf{33} (2014), no. 3, 379--422. 

%\bibitem{GasperRahman}
%G. Gasper and M. Rahman, \textit{Basic hypergeometric series. Second edition}, Cambridge University Press, Cambridge, 2004.

\bibitem{Hei1846}
H. Heilermann, Ueber die Verwandlung der Reihen in Kettenbr\"{u}che, \textit{J. Reine Angew. Math.} \textbf{33} (1846), 174--188.

\bibitem{ITZ}
M. Ishikawa, H. Tagawa, and J. Zeng, A $q$-analogue of Catalan Hankel determinants, in: \textit{New trends in combinatorial representation theory}, 19--41, RIMS K\^oky\^uroku Bessatsu, \textbf{B11}, Res. Inst. Math. Sci. (RIMS), Kyoto, 2009.

\bibitem{Ismail}
M. E. H. Ismail, \textit{Classical and Quantum Orthogonal Polynomials in One Variable}, Cambridge University Press, Cambridge, 2005.

\bibitem{IsmailStanton}
M. E. H. Ismail and D. Stanton, Classical orthogonal polynomials as moments, \textit{Canad. J. Math.} \textbf{49} (1997), no. 3, 520--542.

\bibitem{JiuShi}
L. Jiu and D. Y. H. Shi, Orthogonal polynomials and connection to generalized Motzkin numbers for higher-order Euler polynomials, \textit{J. Number Theory} \textbf{199} (2019), 389--402.

\bibitem{KLS2010}
R. Koekoek, P. A. Lesky, and R. F. Swarttouw, \textit{Hypergeometric Orthogonal Polynomials and Their $q$-Analogues}, Springer-Verlag, Berlin, 2010.

\bibitem{Krattenthaler}
C. Krattenthaler, Advanced determinant calculus, \textit{S\'em. Lothar. Combin.} \textbf{42} (1999), Art. B42q, 67 pp.

\bibitem{Kra2005}
C. Krattenthaler, Advanced determinant calculus: a complement, \textit{Linear Algebra Appl.} \textbf{411} (2005), 68--166.

\bibitem{Muir}
T. Muir and W. H. Metzler, \textit{A Treatise on the Theory of Determinants}, Dover Publications, Inc., New York, NY, 1960.

\bibitem{Roman}
S. Roman, \textit{The Umbral Calculus}, Academic Press, Inc., New York, 1984.

\bibitem{Wall}
H. S. Wall, \textit{Analytic Theory of Continued Fractions}, D. Van Nostrand Co., Inc., New York, NY, 1948.

\end{thebibliography}
\end{document}